\documentclass[11pt]{amsart} 

\usepackage{amssymb}

\usepackage[margin=1.25in]{geometry}

\usepackage{tikz-cd}

\numberwithin{equation}{section}

\usepackage{xcolor}
\definecolor{darkgreen}{rgb}{0,0.45,0}
\usepackage{ifpdf}
\ifpdf  \usepackage[pdftex,colorlinks,urlcolor=blue,citecolor=darkgreen,linkcolor=darkgreen,linktocpage]{hyperref}
\else   \usepackage[dvipdfmx,colorlinks,urlcolor=blue,citecolor=darkgreen,linkcolor=darkgreen,linktocpage]{hyperref}
\fi

\newcommand{\cat}[1]{\mathbf{#1}}
\newcommand{\twocat}[1]{\underline{\mathbf{#1}}}

\newcommand{\Hot}{\cat{Hot}}
\newcommand{\Set}{\cat{Set}}
\newcommand{\Gpd}{\cat{Gpd}}
\newcommand{\HoGpd}{\cat{HoGpd}}
\newcommand{\CC}{\cat{C}}
\newcommand{\K}{\cat{K}}

\newcommand{\mc}{\mathcal}
\newcommand{\GG}{{\mc G}}

\newcommand{\iso}{\cong}

\renewcommand{\subset}{\subseteq}

\newcommand{\id}{\mathrm{id}}
\newcommand{\op}{\mathrm{op}}

\newcommand{\gY}{Z_Y}
\newcommand{\gX}{Z_X}

\renewcommand{\a}{\alpha}
\renewcommand{\b}{\beta}

\renewcommand{\d}{\delta}
\newcommand{\ep}{\varepsilon}
\newcommand{\g}{\gamma}
\renewcommand{\k}{\kappa}
\renewcommand{\l}{\lambda}
\renewcommand{\th}{\theta}
\newcommand{\Z}{\mathbb{Z}}
\newcommand{\BZ}{\mathbf{B}\Z}


\newtheorem{thm}[equation]{Theorem}
\newtheorem{cor}[equation]{Corollary}
\newtheorem{lemma}[equation]{Lemma}
\newtheorem{prop}[equation]{Proposition}

\theoremstyle{definition}
\newtheorem{definition}[equation]{Definition}

\theoremstyle{remark}
\newtheorem{remark}[equation]{Remark}

\DeclareMathOperator*{\colim}{colim}

\newcommand{\sig}{\Sigma_\alpha^{\mathrm{c}}}

\newcommand{\define}[1]{\emph{#1}}

\title{Detecting isomorphisms in the homotopy category}

\author[Kevin Arlin]{Kevin Arlin\textsuperscript{\scriptsize 1}}
\thanks{\textsuperscript{\scriptsize 1}n\'e Carlson}
\address{Topos Institute, Berkeley, CA, USA}
\email{kevin@topos.institute}

\author{J. Daniel Christensen}
\address{University of Western Ontario, London, ON, Canada}
\email{jdc@uwo.ca}

\begin{document}

\keywords{Homotopy category of spaces, Whitehead's theorem, privileged weak colimit, conservative, generator, Brown representability, graph of groups, fundamental groupoid.}

\subjclass{55U35, 18A30, 55P65}

\begin{abstract}
We show that no generalization of Whitehead's theorem holds for unpointed spaces.
More precisely,
we show that the homotopy category of unpointed spaces admits no set of objects
jointly reflecting isomorphisms. We give an explicit counterexample involving
infinite symmetric groups. In contrast, we prove that the spheres
do jointly reflect equivalences in the homotopy 2-category of spaces.
We also show
that homotopy colimits of transfinite sequential diagrams of spaces
are not generally weak colimits in the homotopy category, and furthermore
exhibit such a diagram with the property that none of its weak colimits 
is \emph{privileged}, which means, roughly, that it sees the spheres as compact objects.
The non-existence of a set jointly reflecting isomorphisms in the homotopy
category was originally claimed by Heller, but our results on weak colimits
show that his argument had an inescapable gap, leading to the need for
the new proof given here.
\end{abstract}

\date{July 13, 2023}

\maketitle

\section{Introduction}
\addtocounter{footnote}{1} 

Let $\Hot$ denote the homotopy category of spaces, and let $\Hot_{*,c}$ denote
the homotopy category of pointed, connected spaces.
Whitehead's theorem says that in $\Hot_{*,c}$, the set of spheres jointly
reflects isomorphisms.
One is naturally led to wonder whether there is a set of spaces in $\Hot$
which jointly reflects isomorphisms.

In~\cite{brown}, Brown proved that a functor $\Hot_{*,c}^\op \to \Set$ is
representable if and only if it is half-exact, in the sense that it sends
coproducts and weak pushouts in $\Hot_{*,c}$ to products and weak pullbacks in $\Set$.
In~\cite{heller}, Heller proved an abstract representability theorem:  if $\CC$ is a
category with coproducts and weak pushouts and $\CC$ contains a ``bounded" set $\GG$
of objects that jointly reflects isomorphisms (see Definition~\ref{def:introDefs} below), then a functor
$\CC^\op \to \Set$ is representable if and only if it is half-exact.
In the same paper, Heller gave an example of a half-exact functor $\Hot^{\op} \to \Set$
which is not representable.
He then claimed without proof~\cite[Prop.~1.2]{heller} that every set of spaces
in $\Hot$ is bounded,
and concluded~\cite[Cor.~2.3]{heller} that no set of spaces jointly reflects isomorphisms
in $\Hot$.

We show that it is not true that every set of spaces is bounded,
reopening the question of whether there is a set of spaces that jointly reflects
isomorphisms in $\Hot$.
We thus also give an independent proof that no set of spaces jointly reflects isomorphisms.
\medskip

We now give the definitions needed in order to precisely state our results.
\pagebreak

\begin{definition}\label{def:introDefs}
Let $\CC$ be any category and let $\GG \subset \CC$ be a set of objects.
\begin{enumerate}
 \item We say that $\GG$ \emph{jointly reflects isomorphisms} if a morphism
 $f: X \to Y$ in $\CC$ is an isomorphism whenever
 $\CC(S,f): \CC(S,X) \to \CC(S,Y)$ is a bijection for every $S \in \GG$.%
\footnote{Other terminology is in use, such as ``$\GG$ is a set of (weak) generators''
or ``the functors $\CC(S,-)$ are jointly conservative.''
Heller says that ``$\GG$ is left adequate.''}

 \item A \emph{weak colimit} of a diagram $D : \cat{I} \to \CC$
 is a cocone through which every cocone factors, not necessarily uniquely.

 \item A cocone $W$ of $D:\cat{I} \to \CC$ is \emph{$\GG$-privileged}
 if the canonical map
 \[
   \colim_{\a \in \cat{I}} \CC(S, D(\a)) \to \CC(S, W)
 \]
 is a bijection for every $S \in \GG$.

 \item For an ordinal $\b$, we say that $\GG$ is \emph{$\b$-bounded}
 if every diagram $D: \b \to \CC$ has a $\GG$-privileged weak colimit.

 \item We say that $\GG$ is \emph{left cardinally bounded}, or just \emph{bounded},
 if it is $\b$-bounded for each sufficiently large regular cardinal $\b$.
\end{enumerate}
\end{definition}

We use the word ``set'' to mean what is sometimes called a ``small set,'' i.e.,
an object of the category $\Set$.
All of our ordinals and cardinals are ``small.''
We regard a cardinal as an ordinal which is least in its cardinality class.
The \emph{cofinality} of an ordinal $\a$
is the smallest ordinal that is the order type of a cofinal subset of $\a$.
A cardinal is \emph{regular} if it is equal to its cofinality.

As mentioned above, $\Hot$ denotes the homotopy category of spaces,
by which we mean
the localization of the category of spaces at the weak homotopy equivalences, or equivalently,
the category whose objects are CW-complexes and whose morphisms are homotopy classes of continuous maps.
It is well-known that every small diagram in $\Hot$ has a weak colimit,
and that weak colimits are not unique.
\medskip

We can now state our main results more precisely.
First we give the result that shows that~\cite[Prop.~1.2]{heller} is false.

\theoremstyle{plain}
\newtheorem*{thm:HotLacksColims}{Theorem \ref{thm:HotLacksColims}}
\begin{thm:HotLacksColims}
The set $\GG = \{ S^n \mid n \geq 0 \}$ of spheres in $\Hot$ is not $\k$-bounded
for any ordinal $\k$ of uncountable cofinality.
That is, for each such $\k$, there exists a diagram $D: \k \to \Hot$ that admits no
$\GG$-privileged weak colimit.
\end{thm:HotLacksColims}

Note that Theorem~\ref{thm:HotLacksColims} applies to all uncountable regular cardinals,
showing that the set of spheres is not left cardinally bounded.
By adding one more space to the set, we can remove the uncountability assumption:

\newtheorem*{cor:HotLacksColims}{Corollary \ref{cor:HotLacksColims}}
\begin{cor:HotLacksColims}
Let $T$ denote a countably infinite, discrete space.
Then the set $\{ S^n \mid\break n \geq 0 \} \cup \{ T \}$ is not $\k$-bounded in $\Hot$
for any limit ordinal $\k$.
\end{cor:HotLacksColims}

The proof of Theorem \ref{thm:HotLacksColims} 
is somewhat involved and forms the bulk of the paper.
We first show that it is sufficient to find a counterexample in the
homotopy category $\HoGpd$ of groupoids.
Then, given $\k$ as in the statement, we consider the diagram \mbox{$D : \k \to \HoGpd$}
sending $\alpha$ to the free group on $2+\alpha$ generators.
We make use of the theory of graphs of groups~\cite{serre} and the
associated fundamental group\-oid~\cite{higgins} in order to construct a
sufficiently pathological cocone $D \to Z$ which we use to show that
$D$ admits no $\GG'$-privileged weak colimit, where $\GG' = \{\BZ\}$.
This involves a detailed understanding of the morphisms in $Z$ and
how they are expressed as words in the given generators.
It follows that the diagram $\k \to \Hot$ sending $\alpha$ to the
wedge of $\alpha$ circles has no $\GG$-privileged weak colimit, where $\GG$ is as
in the statement of Theorem \ref{thm:HotLacksColims}.

Heller's argument for his claim~\cite[Prop.~1.2]{heller} that any set $\mathcal G$
of objects in $\Hot$ is bounded
was to take the cocone $W$ to be the homotopy colimit, i.e., a generalized telescope.
Since such homotopy colimits are $\mathcal G$-privileged,
our result above implies that they are not, in general, even weak colimits
in $\Hot$. This is in contrast to the situation for
telescopes of sequences indexed by $\omega$, and for other homotopy colimits of
diagrams indexed by freely-generated categories.
In the introduction to~\cite{franke}, Franke suggests using a Bousfield-Kan
spectral sequence to show that Heller's claim is false,
by comparing weak colimits to homotopy colimits,
but we were unable to find an example in which we could prove that a
certain differential was non-zero.

In the homotopy category of pointed, connected spaces, the set of spheres 
jointly reflects isomorphisms---this is the classical form of Whitehead's theorem.
However, we conjecture that the set of spheres is not bounded in $\Hot_{*,c}$.
If this is true, it means that Heller's abstract representability theorem,
as stated, does not imply Brown's representability theorem.
That said, Heller's argument only requires a set of objects that jointly
reflects isomorphisms and is $\b$-bounded for \emph{some} regular cardinal $\b$.
Thus, since the set of spheres is $\aleph_0$-bounded, the proof of
Heller's theorem goes through in $\Hot_{*,c}$.

\medskip

Next we state the result that shows that the statement of~\cite[Cor.~2.3]{heller} 
is nevertheless correct.

\newtheorem*{thm:noGenerator}{Theorem \ref{thm:noGenerator}}
\begin{thm:noGenerator}
The category $\Hot$ contains no set $\GG$ of spaces that jointly reflects isomorphisms.
That is, there exists no set $\GG$ of spaces such that, if $f: X \to Y$ is a map of spaces
and $f_*: \Hot(S,X) \to \Hot(S,Y)$ is a bijection for every $S \in \GG$,
then $f$ is an isomorphism in $\Hot$.
\end{thm:noGenerator}

This second result is easier to prove, and so we prove it first, in
Section~\ref{se:noGenerator}.
Our method is a generalization of~\cite[Proposition~4.1]{MMS}, which gives
a ``phantom homotopy equivalence,'' that is, a map in
$\Hot$ which while not an isomorphism is seen as one by all finite complexes.
Our proof also shows that there is no set of connected spaces that jointly
reflects isomorphisms in the homotopy category of connected spaces.
Moreover, Theorem~\ref{thm:noGenerator} implies similar
results in other settings.  For example, since $\Hot$ is a reflective subcategory
of the homotopy category of $(\infty,1)$-categories, it follows that there
is no set of $(\infty,1)$-categories that jointly reflects isomorphisms in that
category.

Since the $(\infty,1)$-category $\mc S$ of spaces certainly contains a set of 
objects jointly reflecting equivalences---namely the set whose only element is the one-point space---while 
its 1-categorical truncation $\cat{Hot}$ does not, one might ask which behavior
the $n$-categorical truncations of $\mc S$ exhibit for larger values of $n$. In fact, we show in
Theorem~\ref{thm:SpheresGenerateTwoHot} that in the
2-category $\twocat{Hot}$ of spaces, morphisms, and homotopy
classes of homotopies between them, the set of spheres \emph{does}
jointly reflect equivalences, which is the natural generalization of 
joint reflection of isomorphisms to 2-category theory. Intuitively, the 
reason for the divergent behavior of $\cat{Hot}$ and $\twocat{Hot}$ is
that the 2-morphisms of $\twocat{Hot}$ retain the information about 
based homotopies that is lost in $\cat{Hot}$. 

\medskip

\emph{Acknowledgments:} The first author would like to thank 
George Raptis for suggesting an argument that the spheres should generate 
$\twocat{Hot}$, simpler than that originally given for the tori. 
Both authors thank the referee for many valuable comments that helped to
improve the paper, including the citation to \cite{MMS} that now does the
bulk of the work in Section 4.

\section{\texorpdfstring{$\Hot$}{Hot} admits no set that jointly reflects isomorphisms}
\label{se:noGenerator}

We make the following definitions.
For an ordinal $\alpha$, write $\Sigma_{\alpha}$ for the group of all
bijections of the set $\alpha$, ignoring order.
When $\beta < \alpha$, there is a natural inclusion
$\Sigma_{\beta} \hookrightarrow \Sigma_{\alpha}$,
and we define $\sig$ to be the union of the images of $\Sigma_{\beta}$
for all $\beta < \alpha$.
We typically consider $\sig$ when $\alpha$ is a cardinal,
considered as the smallest ordinal with that cardinality,
and we call the elements of $\sig$ \define{essentially constant} permutations.

\begin{thm}\label{thm:noGenerator}
The category $\Hot$ contains no set $\GG$ of spaces that jointly reflects isomorphisms.
(See Definition \ref{def:introDefs}.)
\end{thm}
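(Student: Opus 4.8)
The plan is to negate the conclusion and, for an \emph{arbitrary} set $\GG$ of spaces, construct a single morphism $f\colon X\to Y$ in $\Hot$ that is not an isomorphism yet induces a bijection $f_*\colon\Hot(S,X)\to\Hot(S,Y)$ for every $S\in\GG$ --- an explicit \emph{phantom homotopy equivalence} tailored to $\GG$. The engine will be the large symmetric groups $\Sigma_\alpha$ together with their normal subgroups $\sig$ of essentially constant permutations. The decisive elementary observation is that \emph{unpointed} homotopy classes out of spheres see only orbit data: for connected $T$ and a discrete group $G$ one has $\Hot(T,BG)\iso\Hom(\pi_1 T,G)/\!\sim$, the set of homomorphisms up to conjugacy, so that $\Hot(S^1,BG)$ records only the conjugacy classes of $G$ and not $G$ itself. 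This is the loophole that lets a non-isomorphism be invisible, and it is the unpointed analogue of the phenomenon noted in the introduction whereby $\Hot$, unlike $\twocat{Hot}$, forgets based-homotopy information.

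First I would pass from $\GG$ to a well-chosen cardinal. Since $\GG$ is a set, the supremum $\mu$ of the numbers of cells of the $S\in\GG$ is again a cardinal, and I would fix an uncountable regular cardinal $\alpha>\mu$. The purpose of this choice is a smallness principle: every $S\in\GG$ has all homotopy groups of cardinality $<\alpha$ and is compact relative to $\alpha$-filtered diagrams, so that any map from $S$, and any homotopy between such maps, into the colimit of an $\alpha$-indexed tower factors through some stage. Writing $\sig=\bigcup_{\beta<\alpha}\Sigma_\beta$ as the corresponding filtered union, this reduces the computation of $\Hot(S,X)$ for the relevant classifying-type spaces to a colimit over the stages $\Sigma_\beta$, hence to a purely group-theoretic count of conjugacy classes of homomorphisms out of the $\alpha$-small groups $\pi_1(S)$.

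With these reductions in place, the task becomes to produce from $\Sigma_\alpha$ and $\sig$ a map $f$ that is not a weak equivalence yet induces, for every $\alpha$-small group $\pi$, a bijection on conjugacy classes of homomorphisms $\pi\to(-)$. I expect the main obstacle to be precisely the interaction of these two requirements on $S^1$, that is, on $\pi=\Z$. The plain inclusion $\sig\hookrightarrow\Sigma_\alpha$ is \emph{not} adequate here: its conjugacy-class map fails to be surjective, since a full-support permutation --- a free $\Z$-action on the set $\alpha$ --- is visibly not conjugate into the bounded-support subgroup, so that $S^1\in\GG$ would already detect it. The construction must therefore be subtler, retaining enough full-support data on the source side to surject on conjugacy classes while still differing from the target in a way that no $\alpha$-small complex can access. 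Managing this balance --- presenting the full-support classes on both sides yet matching them for every $\alpha$-small $\pi$, all without making $f$ a $\pi_1$-isomorphism --- is where the regularity and uncountable cofinality of $\alpha$, and the exact choice of the essentially constant permutations, must do the real work. Once the bijection is verified for every $S\in\GG$, the map $f$ witnesses that $\GG$ does not jointly reflect isomorphisms, which proves the theorem.
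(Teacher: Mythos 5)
Your setup matches the paper's: you fix a regular cardinal $\alpha$ dominating the fundamental groups of the members of $\GG$, you reduce via $\Hot(S,BG)\iso\Hom(\pi_1 S,G)/\text{conjugacy}$ to a purely group-theoretic statement about conjugacy classes of homomorphisms out of $\alpha$-small groups, and you correctly identify $\sig$ as the right group to use and regularity of $\alpha$ as the reason any such homomorphism lands in some $\Sigma_\beta$ with $\beta<\alpha$. You also correctly rule out the naive candidate, the inclusion $\sig\hookrightarrow\Sigma_\alpha$, since a fixed-point-free permutation of $\alpha$ gives a conjugacy class in $\Sigma_\alpha$ not hit from $\sig$, so $S^1$ would detect it.

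But the proposal stops exactly where the proof begins: you never produce the map $f$. Saying that ``the construction must be subtler'' and that this is ``where the real work must be done'' is a statement of the problem, not a solution, and the missing step is the entire content of the theorem. The paper's resolution is not a map between two different spaces at all but a self-map $Bs:B\sig\to B\sig$, induced by the injective, non-surjective shift endomorphism $s$ of $\sig$ that sends $\sigma$ to the permutation $\gamma'+1\mapsto\sigma(\gamma')+1$ fixing all limit ordinals. Non-surjectivity of $s$ (its image preserves limit ordinals) makes $Bs$ a non-equivalence; the point that makes it invisible to $\GG$ is that for any homomorphism $\varphi:G\to\sig$ with image in $\Sigma_\beta$, $\beta<\alpha$, the composite $s\circ\varphi$ is conjugate to $\varphi$ by an explicitly constructed $\tau\in\sig$ that ``makes room'' for the limit ordinals below $\beta$ inside the interval $[\beta,\beta+\beta)$, outside the support of $\varphi$. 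So $Bs$ induces the \emph{identity} on $\Hot(S,B\sig)$ for every $S\in\GG$. Without this endomorphism (or some substitute with the same two properties), your argument does not establish the theorem; it only explains why one particular candidate fails. A smaller point: uncountable cofinality of $\alpha$ plays no role here — regularity and largeness relative to the $\pi_1$'s are what is used — so invoking it signals that the mechanism you have in mind is not yet the right one.
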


\begin{proof}
Let $\GG$ be a set of spaces and let $\a$ be a regular cardinal
larger than the cardinality of $\pi_1(S, s_0)$ for each $S \in \GG$
and each $s_0 \in S$.
We must construct a map $f:X\to Y$ which is not a homotopy equivalence
but which induces bijections on homotopy classes of maps from spaces in 
$\GG$. 

Our example will be $Bs : B\sig \to B\sig$,
where $s:\sig\to\sig$ is the shift homomorphism given by 
\[
  (s\sigma)(\gamma) =
    \begin{cases}
      \sigma(\gamma')+1, & \gamma = \gamma'+1 \\
      \gamma,            & \gamma \text{ a limit ordinal,}
    \end{cases}
\]
for $\sigma \in \sig$.
(Here and in what follows, if $\gamma$ is a successor ordinal, we write $\gamma'$ for its predecessor.)
We must check that $s\sigma\in\sig$. First, it is essentially
constant: if $\b < \a$ and $\sigma$ fixes each $\gamma\geq\beta$, then 
for $\gamma > \b$ we have $(s\sigma)(\gamma)=\gamma$, if 
$\gamma$ is a limit ordinal, and $(s\sigma)(\gamma)=\sigma(\gamma')+1=\gamma'+1=\gamma$,
if $\gamma$ is a successor. 
Next, we see that $s$ is a homomorphism:
$s(\sigma\tau)$ and $(s\sigma) (s\tau)$ both fix all limit ordinals,
while for successors we have
\[(s\sigma)((s\tau) (\gamma))=\sigma([\tau(\gamma')+1]')+1=\sigma\tau(\gamma')+1=s(\sigma\tau)(\gamma) ,\]
as desired. Note that setting $\tau=\sigma^{-1}$, respectively
$\sigma=\tau^{-1}$, we confirm that $s\sigma$ is indeed a 
bijection.

Let $H$ be a group with classifying space $BH$ and let $X$ be a connected space. If $\mathbf{Gp}$ denotes the category of groups, recall that $\Hot(X,BH)$ is isomorphic to $\mathbf{Gp}(\pi_1(X),H)$ modulo conjugation by elements of $H.$
(See, for example, \cite[Corollary~V.4.4]{whitehead}.)
In particular, we have a natural isomorphism $\Hot(X, BH) \cong \Hot(B\pi_1(X), BH)$.
It also follows that for groups $G$ and $H$,
$\Hot(BG, BH)$ is isomorphic to $\mathbf{Gp}(G, H)$ modulo conjugation by elements of $H$,
and that an element of $\Hot(BG, BH)$ is a homotopy equivalence
if and only if it is represented by an isomorphism.

Note that $s$ is not surjective, since $s\sigma$ always preserves limit ordinals.
Therefore, $Bs:B\sig\to B\sig$ is not a homotopy equivalence.
However, we will show that it induces an isomorphism on $\GG$.
First observe that it suffices to prove this for connected components of spaces in $\GG$.
It follows that it is enough
to prove this for spaces of the form $BG$, where $G$ is a group of cardinality less than $\alpha$. 

Any map $BG\to B\sig$ arises from a homomorphism $\varphi:G\to \sig$, well-defined up to conjugation.
Since $\a$ is regular, there is a limit ordinal
$\beta < \alpha$ so that $\varphi(g) \in \Sigma_{\beta}$ for every $g\in G$.  We claim that 
$s \circ \varphi$ is conjugate to $\varphi$ by an element $\tau\in\sig$ defined as follows:
\[
  \tau(\gamma) =
    \begin{cases}
      \gamma',      & \gamma < \beta \text{ a successor ordinal}\\
      \beta+\gamma, & \gamma < \beta \text{ a limit ordinal}\\
      \gamma+1,     & \beta \leq \gamma < \beta + \beta\\
      \gamma,       & \text{otherwise.}
    \end{cases}
\]
It is straightforward to check that $\tau$ is a permutation, 
and it clearly
fixes ordinals greater than or equal to $\beta + \beta$, which is less 
than $\alpha$.
For $g \in G$, let $\sigma = \varphi(g)$.  Then, noting that
$\tau^{-1}(\gamma)=\gamma+1$ for any $\gamma<\beta$, 
we have 
\[
  \begin{aligned}
     (\tau^{-1} \sigma \tau)(\gamma)
  &= \begin{cases}
       \tau^{-1} (\sigma (\gamma')),        & \gamma < \beta \text{ a successor ordinal}\\
       \tau^{-1} (\sigma (\beta + \gamma)), & \gamma < \beta \text{ a limit ordinal}\\
       \tau^{-1} (\sigma (\gamma + 1)),     & \beta \leq \gamma < \beta + \beta\\
       \tau^{-1} (\sigma (\gamma)),         & \text{otherwise}\\
     \end{cases} \\
  &= \begin{cases}
       \tau^{-1} (\sigma (\gamma')), & \gamma < \beta \text{ a successor ordinal}\\
       \tau^{-1} (\beta + \gamma),   & \gamma < \beta \text{ a limit ordinal}\\
       \tau^{-1} (\gamma + 1),       & \beta \leq \gamma < \beta + \beta\\
       \tau^{-1} (\gamma),           & \text{otherwise}\\
     \end{cases} \\
  &= \begin{cases}
       \sigma (\gamma') + 1, & \gamma < \beta \text{ a successor ordinal}\\
       \gamma,               & \gamma < \beta \text{ a limit ordinal}\\
       \gamma,               & \beta \leq \gamma < \beta + \beta\\
       \gamma,               & \text{otherwise}\\
     \end{cases}\\
  &= s(\sigma)(\gamma).
  \end{aligned}
\]
We have used that if $\gamma \geq \beta$, then $\sigma(\gamma) = \gamma$,
and the consequence that if $\gamma < \beta$, then $\sigma(\gamma) < \beta$. 

In summary, we have shown that $Bs$ induces the identity on $\Hot(S,B\sig)$ for every $S \in \GG$,
proving the claim.
\end{proof}

\begin{remark}
Since the map $Bs: B\sig \to B\sig$ used in the proof has connected domain and
codomain, it follows that there is no set of connected spaces that jointly
reflects isomorphisms in the homotopy category of connected spaces.
\end{remark}

We explain the origin of the maps $s$ and $\tau$.
Morally, $s$ is conjugation by the successor operation on ordinals, with limit
ordinals handled specially.
The map $\tau$ implements this by ``making room'' for the relevant limit ordinals in a range
outside of the support of a particular permutation $\sigma$.
In fact, if we denote the map $\tau$ above by $\tau_\b$, then $s$ itself is conjugation by
$\tau_\a$ in $\Sigma_\gamma^{\mathrm{c}}$ for a regular cardinal $\gamma > \a$.

\begin{remark}
The referee pointed out an alternate proof of Theorem~\ref{thm:noGenerator},
which makes use of the techniques employed in~\cite[Lemma 2.2]{heller},
namely the use of HNN-extensions.
It also involves a map between classifying spaces, but is less explicit.
In addition, the referee and N.~Kuhn pointed out that the case when
$\alpha = \omega$ was proved in~\cite[Proposition~4.1]{MMS}, using an
approach very similar to the approach given here.
\end{remark}


\section{The lack of privileged weak colimits}

In this section, we give an example showing that Heller's privileged weak colimits
do not generally exist.

\begin{thm}\label{thm:HotLacksColims}
The set $\GG = \{ S^n \mid n \geq 0 \}$ of spheres in $\Hot$ is not $\k$-bounded
for any ordinal $\k$ of uncountable cofinality, e.g., for any uncountable regular cardinal.
That is, for each such $\k$, there exists a diagram $D: \k \to \Hot$ that admits no
$\GG$-privileged weak colimit.
\end{thm}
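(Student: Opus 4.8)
The plan is to reduce the problem to the homotopy category of groupoids and then build an explicit counterexample there. The fundamental groupoid functor $\Pi_1 \colon \Hot \to \HoGpd$ is left adjoint to the classifying space functor $B \colon \HoGpd \to \Hot$, and $B$ is fully faithful with essential image the $1$-types. Since each $BG$ is a $1$-type, the adjunction identifies $\Hot(S^n, BG)$ with $\HoGpd(\Pi_1 S^n, G)$, where $\Pi_1 S^0$ is a two-point discrete groupoid, $\Pi_1 S^1 \iso \BZ$, and $\Pi_1 S^n$ is trivial for $n \geq 2$. Thus, on $1$-types, the spheres detect exactly $\pi_0$ together with the free homotopy classes of loops, i.e. conjugacy classes of automorphisms. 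I would first observe that if $BD$, for a diagram $D \colon \k \to \HoGpd$, admitted a $\GG$-privileged weak colimit $W'$ in $\Hot$, then privilege would force $\Hot(S^n, W') \iso \colim_\a \Hot(S^n, BD(\a))$ to be a point for $n \geq 2$ (the right-hand colimit being $\pi_0$-data), hence $\pi_n(W') = 0$ and $W' \equiv B\Pi_1 W'$. Applying $\Pi_1$ then yields a weak colimit of $D$ that is privileged with respect to $\{\,1, \BZ\,\}$. So it suffices to produce a diagram $D \colon \k \to \HoGpd$ admitting no weak colimit privileged for $\{\,1, \BZ\,\}$.

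Next I would write the diagram down explicitly, generalizing the shift construction of Theorem~\ref{thm:noGenerator}: each $D(\a)$ is the one-object groupoid $B\sig$ (or a comparable large group), and the transition maps are assembled from the non-surjective shift homomorphism $s$, which is injective yet induces the identity on the conjugacy classes meeting any fixed bounded subgroup. Because the set of free homotopy classes of loops commutes with the filtered colimits along $\k$ (conjugacy classes commute with filtered colimits of groupoids along injective transition maps), I would record that \emph{if} a $\{\,1,\BZ\,\}$-privileged weak colimit $W$ existed then two rigid constraints would hold. Writing $C = \colim_{\a < \k} D(\a)$ for the genuine colimit and $G_W$ for the automorphism group of the (necessarily unique) object of $W$: first, $C$ is a retract of $W$ in $\HoGpd$, since the colimit cocone factors through the weak colimit and colimit-uniqueness makes the composite the identity on $C$; second, the canonical comparison $C \to W$ induces a \emph{bijection} on conjugacy classes. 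Together these pin down the loop-theoretic behaviour of any candidate $W$ very tightly.

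The heart of the argument is then to construct a single pathological cocone $D \to Z$ that no such $W$ can accommodate. I would take $Z$ to be the fundamental groupoid of a graph of groups whose underlying graph is a ray of length $\k$, with vertex and edge groups chosen so that the given cocone spreads the objects $D(\a)$ cofinally along the ray and so that the isomorphisms witnessing the cocone relations are realized by edge-traversals; the conjugators $\tau$ of Theorem~\ref{thm:noGenerator} reappear here as the data identifying successive vertex groups. The theory of Serre and Higgins supplies a normal form expressing each morphism of $Z$ as a reduced alternating word in edge letters and vertex-group elements, and the structural fact I would extract is that every such word is \emph{finite}, so each morphism of $Z$ only involves vertices lying below some ordinal strictly less than $\k$. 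Factoring the cocone through the putative weak colimit gives $u \colon W \to Z$ with $u$ composed with the $\a$-th leg equal to the given leg for every $\a$; combining this with the retraction of $C$ off $W$ and the conjugacy bijection above forces a single morphism of $Z$ that must connect the image of $W$ across a cofinal subset of the ray, and hence cannot be a finite reduced word.

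This is exactly where uncountable cofinality enters, and it is the step I expect to be the main obstacle. If $\mathrm{cf}(\k)$ were countable one could pass to a cofinal $\omega$-chain and form the mapping telescope, which is a weak colimit and is privileged (homotopy groups, and hence free-loop sets of $1$-types, commute with sequential colimits); so the hypothesis cannot be dropped. Under uncountable cofinality there is no cofinal $\omega$-chain, and the normal-form bookkeeping shows that no finite word can reach cofinally far along the ray, producing the contradiction. The difficulty is precisely in making this quantitative: one must prove that the morphism forced into $W$ by the weak-colimit property genuinely has unbounded length in $Z$, which demands careful control of reduction of words in the graph of groups and of how the conjugating elements interact with the edge structure, so that no amount of cancellation can shorten it to a finite expression supported below some $\a < \k$.
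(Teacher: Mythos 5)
Your high-level strategy---reduce to $\HoGpd$ via the $\Pi_1 \dashv B$ adjunction, realize a pathological cocone as the fundamental groupoid of a graph of groups, and derive a contradiction from the finiteness of reduced words using uncountable cofinality---is exactly the paper's. The reduction step in your first paragraph is essentially Lemma~\ref{ReductionToHog} and is fine. But the middle of your argument has genuine gaps. First, the retraction claim: you write that the ``genuine colimit'' $C = \colim_\a D(\a)$ is a retract of any weak colimit $W$. The colimit in $\HoGpd$ need not exist (indeed, its non-existence is close to the content of the theorem), and if you instead mean the strict colimit in $\Gpd$, then the weak colimit's cocone $\l \colon D \to W$ only commutes up to natural isomorphism, so it does not factor through the strict colimit; you get a map $W \to C$ but not the section $C \to W$. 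The obstruction to strictifying the coherence isomorphisms at limit ordinals is precisely the heart of the matter (Lemma~\ref{IncoherenceOfZ}), so the retraction argument assumes away the difficulty. Second, your choice of diagram (shift endomorphisms of $\sig$) and of graph (a ray) would not support the construction: a ray indexed by $\k$ is disconnected at limit ordinals, so the required isomorphisms $A_\b \iso A_\a \circ D^\b_\a$ witnessing the cocone condition have no candidate morphisms $\b \to \a$ in $Z$; the paper instead uses the complete graph on $[2,\k)$ with edge groups $G_{\min(\a,\b)}$, and takes the $D(\a)$ to be free groups with inclusion transition maps, because the key rigidity statement (Lemma~\ref{lem:tIsHorizontal}, characterizing which morphisms of $Z$ can serve as coherence conjugators) rests on centralizer and conjugation facts special to free groups (Corollary~\ref{cor:onFreeGroups}).

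Finally, you candidly flag that the quantitative core---showing no finite reduced word can serve---is ``the main obstacle,'' and indeed that is where all the work lies. The paper's mechanism is: privilege with respect to $\BZ$ forces every automorphism of $W$ to be conjugate into the image of some leg; the composites $w_{\a\b\g} = \hat h^\a_\g \hat h^\b_\a \hat h^\g_\b$ of the coherence isomorphisms are such automorphisms; mapping to $Z$ places their conjugates $u_{\a\b\g}$ simultaneously in the edge subgroupoid $\gY$ (by Lemma~\ref{lem:tIsHorizontal}) and, up to conjugacy, in a vertex group (by privilege), forcing $u_{\a\b\g} = \id$ (Lemma~\ref{lem:NoysinX}); this produces a strictly coherent family $u^\b_\a$ in $\gY$, which Lemma~\ref{IncoherenceOfZ} rules out by extracting an $\omega$-chain $\d_0 < \d_1 < \cdots < \d_\omega < \k$ (using uncountable cofinality) along which the reduced word for $u^{\d_0}_{\d_\omega}$ would need infinitely many distinct letters. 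None of this chain of reductions appears in your proposal, so as it stands the proof is incomplete at its crucial step.
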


In particular, $D$ admits no $\GG$-privileged weak colimit for any set $\GG$ containing
the spheres.
Note that the set of spheres is $\aleph_0$-bounded, so we learn that boundedness
for one ordinal does not imply it for ordinals with larger cofinality.

\begin{cor}\label{cor:HotLacksColims}
Let $T$ denote a countably infinite, discrete space.
Then the set $\{ S^n \mid n \geq 0 \} \cup \{ T \}$ is not $\k$-bounded in $\Hot$
for any limit ordinal $\k$.
\end{cor}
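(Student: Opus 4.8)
The plan is to split on the cofinality of $\k$, treating the uncountable case by direct appeal to Theorem~\ref{thm:HotLacksColims} and the countable case by a genuinely new argument that exploits the failure of a countable product to commute with a colimit of countable cofinality. Throughout write $\GG' = \GG \cup \{T\}$ and note that, since $\GG \subseteq \GG'$, every $\GG'$-privileged weak colimit is in particular $\GG$-privileged; hence it suffices, for each limit ordinal $\k$, to exhibit a diagram $D : \k \to \Hot$ admitting no $\GG'$-privileged weak colimit.

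First suppose $\k$ has uncountable cofinality. Then Theorem~\ref{thm:HotLacksColims} already supplies a diagram $D : \k \to \Hot$ with no $\GG$-privileged weak colimit, and by the observation above this same $D$ has no $\GG'$-privileged weak colimit either. This case requires nothing new, and in fact it is the only place the harder theorem is used.

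The substance is the case $\mathrm{cf}(\k) = \omega$, where $\GG$ alone cannot do the job---indeed the spheres are $\aleph_0$-bounded, and more generally, since any countable subset of a colimit of uncountable cofinality is bounded, the device below fails precisely for uncountable cofinality. I would choose a strictly increasing sequence $\k_0 < \k_1 < \cdots$ with $\sup_n \k_n = \k$ and define $D : \k \to \Hot$ by letting $D(\a)$ be the discrete space on the finite set $\{\, n \in \omega : \k_n \le \a \,\}$, with transition maps the evident inclusions. Then each $\pi_0 D(\a)$ is finite while $\colim_\a \pi_0 D(\a) \iso \omega$ is countably infinite. Suppose for contradiction that $W$ is a $\GG'$-privileged weak colimit. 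Applying privilege at $S^0 \in \GG$, and using that binary products commute with filtered colimits together with $\Hot(S^0, Z) \iso \pi_0(Z) \times \pi_0(Z)$, the canonical map $\phi : \colim_\a \pi_0 D(\a) \to \pi_0 W$ is forced to be a bijection; in particular $\pi_0 W$ contains infinitely many pairwise distinct classes $\phi(c_0), \phi(c_1), \ldots$, one for each point $c_n$ of the colimit.

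To finish I would apply privilege at $T$. Writing $T \iso \omega$ as a countably infinite discrete space, we have $\Hot(T, Z) \iso \pi_0(Z)^{\omega}$, so $\GG'$-privilege forces the canonical comparison map
\[
  \colim_\a \bigl(\pi_0 D(\a)\bigr)^{\omega} \longrightarrow \bigl(\colim_\a \pi_0 D(\a)\bigr)^{\omega} \iso (\pi_0 W)^{\omega}
\]
to be a bijection. But the sequence $(n \mapsto \phi(c_n)) \in (\pi_0 W)^{\omega}$ has infinite image, whereas every element of the left-hand colimit is represented by a single map $T \to D(\a_0)$ with $\a_0 < \k$, hence lands in the finite set $\pi_0 D(\a_0)$ and has finite image. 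Thus the map is not surjective---a contradiction, so no $\GG'$-privileged weak colimit exists. The main obstacle here is conceptual rather than computational: one must recognize that $T$ is exactly the object detecting the non-commutation of a countable product with a sequential colimit, and correspondingly that $S^0$ is needed only to pin down $\pi_0 W$ as the genuine colimit, so that the diagonal sequence can witness non-surjectivity.
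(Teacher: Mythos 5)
Your proof is correct and follows the same case split as the paper: uncountable cofinality is handled by Theorem~\ref{thm:HotLacksColims}, and countable cofinality by the failure of a countable product to commute with a sequential colimit, which the paper asserts in one line (claiming $\{T\}$ alone is not $\k$-bounded) and you work out in detail. The only cosmetic difference is that you invoke $S^0$ to identify $\pi_0 W$, whereas injectivity of $\phi$ (which is all you need for the diagonal sequence to have infinite image) already follows from $T$-privilege applied to constant maps, so $T$ alone suffices as the paper indicates.
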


\begin{proof}
Since $\k$ is a limit ordinal, it has infinite cofinality.
If $\k$ has uncountable cofinality, then Theorem~\ref{thm:HotLacksColims} applies.
If $\k$ has countable cofinality, then $\{ T\}$ is not $\k$-bounded.
\end{proof}

In Section~\ref{sse:spaces-to-groupoids}, we reduce the problem to finding
a counterexample in the homotopy category of groupoids.
In Section~\ref{sse:graphs-of-groups}, we recall the theory of graphs of
groups, and prove some general results about the word problem in the
fundamental groupoid of a graph of groups.
Finally, in Section~\ref{sse:groupoids}, we give a counterexample in the homotopy
category of groupoids and complete the proof of Theorem~\ref{thm:HotLacksColims}.

\subsection{Reducing from spaces to groupoids}\label{sse:spaces-to-groupoids}

To prove Theorem~\ref{thm:HotLacksColims} we will
work primarily in the homotopy category $\HoGpd$ of groupoids, that is,
the category of groupoids and isomorphism classes of functors. It is well known
that the geometric realization of groupoids induces a reflective embedding
$B:\HoGpd\to\Hot$ whose left adjoint is the fundamental groupoid functor
$\Pi_1$ and whose image consists of the $1$-types, i.e., the spaces $X$
with $\pi_n(X,x)=0$ for all $x\in X$ and $n>1.$
All this follows from the adjunction between $\pi_1$ and the classifying
space functor $B$ that was used in the proof of Theorem~\ref{thm:noGenerator}.

\begin{lemma}\label{ReductionToHog}
Suppose given a diagram $D:J\to \HoGpd$, a set $\GG'$ of groupoids,
and a set $\GG$ of spaces containing $B \GG'$ as well as $S^n$ for all $n$.
If $D$ admits no $\GG'$-privileged weak colimit in $\HoGpd$, then 
$B\circ D:J\to \Hot$ admits no $\GG$-privileged weak colimit in 
$\Hot$. 
\end{lemma}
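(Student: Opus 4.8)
The plan is to prove the contrapositive: assuming $B\circ D$ admits a $\GG$-privileged weak colimit $W$ in $\Hot$, I will show that $\Pi_1 W$ is a $\GG'$-privileged weak colimit of $D$ in $\HoGpd$, contradicting the hypothesis. Throughout I use the adjunction $\Pi_1 \dashv B$: since $B$ is fully faithful, the counit $\Pi_1 B \Rightarrow \id$ is an isomorphism, and the unit $\eta_W : W \to B\Pi_1 W$ is the first Postnikov truncation, which is an isomorphism on $\pi_0$ and $\pi_1$. The argument splits into showing that $\Pi_1 W$ is a weak colimit and that it is $\GG'$-privileged; the second part is where the spheres in $\GG$ do the essential work, and I expect it to be the main obstacle.

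First, the weak-colimit property is purely formal. The cocone legs $BD(\a)\to W$ yield, upon applying $\Pi_1$ and the isomorphism $\Pi_1 BD(\a)\cong D(\a)$, a cocone $D(\a)\to \Pi_1 W$. Given any competing cocone $D(\a)\to H$ in $\HoGpd$, I apply $B$ to obtain a cocone $BD(\a)\to BH$, factor it through $W$ using the weak universal property of $W$, and apply $\Pi_1$ to get a map $\Pi_1 W\to \Pi_1 BH\cong H$. Naturality of the counit shows that this map factors the original cocone, so $\Pi_1 W$ is a weak colimit of $D$. This step uses nothing about the spheres.

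The privilege property is where the spheres enter. Since $BS'\in\GG$ for $S'\in\GG'$ and $B$ is fully faithful, the $\GG$-privilege of $W$ gives natural bijections $\colim_\a \HoGpd(S', D(\a)) \cong \colim_\a \Hot(BS', BD(\a)) \cong \Hot(BS', W)$, so it remains to identify the right-hand side with $\HoGpd(S', \Pi_1 W)$. If $W$ were a classifying space this would be immediate from full faithfulness of $B$, but it can fail for a general weak colimit $W$, since maps into a space with nontrivial higher homotopy are not detected by the fundamental groupoid; thus the spheres in $\GG$ are indispensable. The key observation is that $\GG$-privilege with respect to the higher spheres forces $W$ to be aspherical: for $n\ge 2$ every map $S^n\to BD(\a)$ is freely homotopic to a constant, as $BD(\a)$ is a disjoint union of $K(\pi,1)$'s, so the image of the privilege bijection $\colim_\a \Hot(S^n, BD(\a))\cong \Hot(S^n, W)$ consists entirely of constant maps. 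Since this map is surjective, every map $S^n\to W$ is inessential, which forces $\pi_n(W)=0$ for all $n\ge 2$.

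Finally, with $\pi_n(W)=0$ for $n\ge 2$, the unit $\eta_W:W\to B\Pi_1 W$ is a map of aspherical spaces inducing isomorphisms on $\pi_0$ and $\pi_1$, hence a weak equivalence, so $W\simeq B\Pi_1 W$ in $\Hot$. Therefore $\Hot(BS', W)\cong \Hot(BS', B\Pi_1 W)\cong \HoGpd(S',\Pi_1 W)$ by full faithfulness of $B$, and combining this with the chain of bijections above shows that $\Pi_1 W$ is $\GG'$-privileged. The only delicate point left is to check that all of these adjunction isomorphisms assemble into the canonical comparison map defining privilege, which is a routine diagram chase using naturality of the unit and counit.
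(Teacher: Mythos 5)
Your proposal is correct and follows essentially the same route as the paper: prove the contrapositive, note that the weak colimit property transfers formally along the adjunction $\Pi_1\dashv B$ (the paper simply cites that left adjoints preserve weak colimits), use $\GG$-privilege with respect to the higher spheres to conclude that the weak colimit is a $1$-type and hence equivalent to $B\Pi_1 W$, and then deduce $\GG'$-privilege from full faithfulness of $B$. Even the final caveat --- that the composite bijection must be checked to agree with the canonical comparison map --- is deferred in the same way in the paper.
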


\begin{proof}
We prove the contrapositive.
Let $\l:B\circ D\to X$ be a $\GG$-privileged weak colimit, with $X\in\Hot$.
Then, since left adjoints preserve weak colimits, $\Pi_1(\l) : D \to \Pi_1 X$ is
a weak colimit.  We will show that it is $\GG'$-privileged.

First, since $\l$ is $\GG$-privileged, every map $a:S^n\to X$ factors through
a 1-type $B D(j)$ for some $j$. Thus, when $n > 1$, $a$ is freely homotopic to a constant,
which implies that $\pi_n(X,x)$ is trivial for all $x\in X$. We conclude
that $X$ is a $1$-type itself, so that $X\simeq B\left(\Pi_1 X\right)$. 

Since $B$ is fully faithful, we see that $\Pi_1(\l): D\to \Pi_1 X$ is 
$\GG'$-privileged. Indeed, if $G\in \GG'$, then 
\begin{align*}
\HoGpd(G, \Pi_1 X)&\iso\Hot(BG,B\left(\Pi_1X\right))\iso \Hot(BG,X)\\
&\iso \colim_j \Hot(BG,BD(j)) \iso \colim_j \HoGpd(G,D(j)).
\end{align*}
One can show that the composite isomorphism is induced by $\Pi_1(\l)$.
\end{proof}

Thus it suffices to exhibit appropriately pathological diagrams in 
$\HoGpd$, and then to upgrade them to $\Hot$. We aim to give a
diagram in $\HoGpd$ admitting no weak colimit privileged with
respect to the set $\GG'=\{\BZ\}$.
Here $\BZ$ denotes the groupoid freely generated by an automorphism,
i.e., the groupoid with one object $*$ whose endomorphism group is the integers.
Of course, $B(\BZ)$ is homotopy equivalent to $S^1$,
so $\GG$ in Lemma~\ref{ReductionToHog} can be taken to be the set of spheres.

\begin{remark}\label{rem:s1vsautos}
Note that, for any groupoid $G$, a functor $f:\BZ\to G$ corresponds to an object
$f(*)$ of $G$ and an automorphism $f_*:f(*)\to f(*)$. Furthermore, two such
functors $f,g:\BZ\to G$ are naturally isomorphic if and only if the automorphisms
$f_*$ and $g_*$ are conjugate in $G$. In particular, a functor
$f:\BZ\to G$ factors through $h:H\to G$ in $\HoGpd$ if and only if $f_*$ is conjugate to 
an automorphism in the image of $h$. 
\end{remark}

\subsection{Graphs of groups}\label{sse:graphs-of-groups}

To construct our example,
we recall the notion of a graph of groups, and prove
Corollaries~\ref{cor:monotoneReduction} and \ref{cor:Xesembed}, and Lemma~\ref{lem:onFreeGroups}
that will be used in the next section.

\begin{samepage}

\begin{definition}\label{def:graphOfGroups}
A \emph{graph of groups} $\Gamma$ is given by:
\begin{itemize} 
\item A graph, i.e., a set $X$ of vertices, a set $Y$ of oriented edges,
functions $s, t: Y \rightrightarrows X$, and
an involution $\overline{(-)}: Y \to Y$ interchanging $s$ and $t$.
\item Groups $G_x$ and $G_y$ for $x\in X$ and $y\in Y$ equipped with monomorphisms
$\mu_y:G_y\to G_{s(y)}$ such that $G_y=G_{\bar y}$.
\end{itemize}
For simplicity, we assume that the groups $G_x$ are disjoint.
For more on graphs of groups, see~\cite[Section~I.5]{serre}
and~\cite[Section~1.B]{hatcher}.
\end{definition}

\end{samepage}

Higgins \cite{higgins} defined the fundamental
groupoid $\Pi_1\Gamma$ of a graph of groups. The groupoid
$\Pi_1\Gamma$ is the groupoid on objects $X$ with generating morphisms the elements of the 
groups $G_x$, endowed with $x$ as domain and codomain, together with the elements of $Y$ viewed 
as morphisms $y:s(y)\to t(y)$. These 
generators are subject to the relations holding in the groups $G_x$, as well as new relations
\[\mu_{\bar y}(a)=y\mu_y(a)\bar y,\] 
for every $y$ and every $a\in G_y$. 
Note in particular that $\bar y=y^{-1}$, and we shall use both notations. It may aid
the intuition to consider $\Pi_1\Gamma$ as the fundamental groupoid of the space built
from $\coprod_X BG_x$ with cylinders $BG_y\times I$ glued in for each set $\{y,\bar y\}$
of elements of $Y$ related by the involution.

By definition, the groupoid $\Pi_1\Gamma$ is a quotient of the groupoid $\K$ with
object set $X$ and with morphisms freely generated by
$(\coprod G_x) \coprod Y$, subject to the relations holding in the groups $G_x$.
A morphism $x_0\to x_n$ in $\K$ is given by a word $(a_n,y_n,\ldots,y_1,a_0)$, with $y_i\in Y$, $s(y_1)=x_0$, $t(y_n)=x_n$, and 
$s(y_{i+1})=t(y_i) =: x_i$ for $1 \leq i < n$, while $a_i \in G_{x_i}$ for $0 \leq i \leq n$.

The natural realization functor $\K \to \Pi_1\Gamma$ will be denoted by
$|(a_n,y_n,\ldots,y_1,a_0)|=a_n\circ y_n\circ \cdots \circ y_1\circ a_0$. Higgins proves that
every morphism of $\Pi_1\Gamma$ is \emph{uniquely} the image under $|\cdot |$ of 
a so-called ``normal'' word. We will not recall this concept, as we need only Higgins' 
corollary regarding the less rigid \emph{irreducible} words. 

A morphism $(a_n,y_n,\ldots,y_1,a_0)$ in $\K$ is called
\emph{reducible} if $n > 1$ and for some $i$, $y_{i-1}=\bar y_i$ and $a_{i-1} \in \mu_{y_i}(G_{y_i})$.
Otherwise, the morphism is said to be \emph{irreducible}.
Note that a reducible word can be shortened by the move 
\[
 (\ldots, a_i, y_i,\mu_{y_i}(\hat a_{i-1}),\bar y_i, a_{i-2}, \ldots) \mapsto
 (\ldots, a_i \mu_{\bar y_i}(\hat a_{i-1}) a_{i-2}, \ldots)
\]
to a word with the same realization.
Therefore, every element of $\Pi_1 \Gamma$ is the realization of an irreducible word.
We will use a key result of \cite{higgins}.

\begin{prop}[{\cite[Corollary 5]{higgins}}]\label{thm:higgins}
Let $w$ be an irreducible word in $\K$.
If $|w|$ is an identity morphism in $\Pi_1\Gamma$,
then $w = (e)$, where $e$ is an identity element of some $G_x$.
\end{prop}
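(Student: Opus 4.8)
The plan is to deduce the statement from the normal-form theorem recalled just above, namely Higgins' result that every morphism of $\Pi_1\Gamma$ is the image under $|\cdot|$ of a \emph{unique} normal word. An irreducible word need not be normal, since normality additionally constrains the vertex-group entries $a_i$ to lie in chosen transversals for the subgroups $\mu_y(G_y)$; so the first task is to bridge from irreducible words to normal words without disturbing the edge data. I would do this by \emph{normalizing}: processing the interior entries $a_1,\ldots,a_{n-1}$ from one end, I replace each $a_i$ by a coset representative and transport the surplus factor, which lies in $\mu_{y_{i+1}}(G_{y_{i+1}})$, across the adjacent edge $y_{i+1}$ using the defining relation, rewritten as $y\,\mu_y(a)=\mu_{\bar y}(a)\,y$, thereby absorbing it into $a_{i+1}$. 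Crucially this procedure never alters the edge sequence $(y_n,\ldots,y_1)$, and so preserves the number $n$ of edges; it only rewrites the vertex-group elements, and it preserves the realization, since it uses only relations holding in $\Pi_1\Gamma$.

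Granting that normalization carries an irreducible word to a \emph{normal} word of the same edge-length and the same realization (the obstacle, discussed below), the conclusion is immediate. Since $|w|$ is an identity morphism, $w$ is a loop at some $x\in X$. Normalize $w$ to a normal word $w'$ with $|w'|=\id_x$ and the same edge-length $n$. The length-zero word $(e)$, with $e$ the identity of $G_x$, is itself normal and satisfies $|(e)|=\id_x$, so by the uniqueness half of Higgins' theorem $w'=(e)$, forcing $n=0$. Thus $w=(a_0)$ is a single vertex-group element; but $(a_0)$ is already normal (a word with no interior entries carries no transversal constraint and is vacuously irreducible), so uniqueness applied to the morphism $\id_x$ gives $(a_0)=(e)$, i.e.\ $a_0=e$. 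This is exactly the assertion that $w=(e)$.

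The main obstacle is verifying that normalization preserves \emph{irreducibility}, so that the output really is normal—irreducible together with the transversal condition—rather than merely satisfying the coset condition. The danger is that transporting a factor across $y_{i+1}$ right-multiplies the neighbouring entry $a_{i+1}$ by an element $\mu_{\bar y_{i+1}}(c)$, and at a backtrack $y_{i+1}=\bar y_{i+2}$ one has $\mu_{\bar y_{i+1}}(c)\in\mu_{y_{i+2}}(G_{y_{i+2}})$, precisely the subgroup whose membership controls reducibility at that junction. The point is that multiplying by an element of a subgroup does not change whether an element lies in that subgroup, so the irreducibility hypothesis on $w$—which says each such $a_{i+1}$ starts \emph{outside} $\mu_{y_{i+2}}(G_{y_{i+2}})$—is maintained throughout, and the stripped representatives inherit the same property. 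Because each processing step alters only $a_i$ and $a_{i+1}$, junctions already finalized to the left are untouched, so a single pass suffices and the resulting word is genuinely normal. This is the graph-of-groups analogue of the familiar Britton's-lemma argument for HNN extensions and amalgams, and one could alternatively run the same bookkeeping through the action of $\Pi_1\Gamma$ on its Bass–Serre tree, where an irreducible word is realized by a reduced edge-path and can bound a loop only if it is trivial.
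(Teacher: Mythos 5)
The paper does not actually prove this proposition---it is quoted as Corollary~5 of Higgins, with the unique normal form theorem recalled just above as its source---and your argument is a correct reconstruction of exactly that derivation: a length-preserving, realization-preserving normalization of an irreducible word, followed by an appeal to the uniqueness of normal forms to force the length to be zero and the single remaining entry to be an identity. The one point needing care, namely that stripping off coset representatives and transporting factors across a backtracking edge multiplies the critical entry $a_{i-1}$ only on the left and right by elements of the very subgroup $\mu_{y_i}(G_{y_i})$ whose membership governs reducibility at that junction, is correctly identified and correctly resolved, so irreducibility survives normalization and the proof goes through.
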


Define the \emph{length} $\ell(w)$ of the word $w = (a_n,y_n,\ldots,y_1,a_0)$ to be $n$.
We deduce the following:

\begin{cor}\label{cor:monotoneReduction}
Let $\Gamma$ be a graph of groups and consider a word $w$
in the groupoid $\K$.
If $\ell(w) > 0$ and $|w|$ is equal to the realization of a zero-length word,
then $w$ is reducible.
\end{cor}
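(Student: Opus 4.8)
The plan is to argue by contradiction and reduce to Higgins' result (Proposition~\ref{thm:higgins}) after a small modification of the word. Suppose $w = (a_n, y_n, \ldots, y_1, a_0)$ satisfies $\ell(w) = n > 0$ and $|w| = |(a)|$ for some zero-length word $(a)$ with $a \in G_x$, and suppose toward a contradiction that $w$ is irreducible. Since $|(a)|$ is an endomorphism of the vertex $x$, equality of morphisms in $\Pi_1\Gamma$ forces the source $x_0$ and target $x_n$ of $|w|$ both to equal $x$; in particular $a_n$ and $a$ lie in the same group $G_x = G_{x_n}$.

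The idea is then to compose on the target side with $a^{-1}$ so as to produce an identity. Explicitly, I would set
\[
  w'' = (a^{-1} a_n,\, y_n, \ldots, y_1,\, a_0),
\]
which is again a word in $\K$ from $x_0$ to $x_n$, of the same length $n > 0$, and whose realization is $|w''| = a^{-1} \circ |w| = a^{-1} \circ a = \id_x$.

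The key observation is that passing from $w$ to $w''$ alters only the endpoint group element $a_n$, replacing it by $a^{-1}a_n$. But the reducibility condition---that for some index $i$ one has $y_{i-1} = \bar y_i$ and $a_{i-1} \in \mu_{y_i}(G_{y_i})$---requires both $y_{i-1}$ and $y_i$ to exist, hence $2 \le i \le n$, so it involves only the interior group elements $a_1, \ldots, a_{n-1}$ together with consecutive pairs of edges, and never the endpoint elements $a_0$ or $a_n$. Thus $w''$ is irreducible precisely because $w$ is. (When $n = 1$ both words are automatically irreducible, so the same conclusion holds.) Now $w''$ is an irreducible word whose realization is an identity morphism, so Proposition~\ref{thm:higgins} forces $w'' = (e)$ to be a zero-length word, contradicting $\ell(w'') = n > 0$. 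Therefore $w$ must have been reducible.

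I expect the only delicate point to be the claim that the modification preserves irreducibility: one must confirm that the endpoint element $a_n$ genuinely plays no role in the reducibility condition, so that altering it can neither create nor destroy a reduction. Everything else is bookkeeping about matching domains and codomains and about composition in $\Pi_1\Gamma$.
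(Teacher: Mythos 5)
Your proof is correct and is essentially the paper's argument in mirror image: the paper absorbs $a^{-1}$ into the source-side element, forming $w' = (a_n, y_n, \ldots, y_1, a_0 a^{-1})$, applies Proposition~\ref{thm:higgins} to conclude $w'$ is reducible, and notes that reducibility depends only on interior data, whereas you absorb $a^{-1}$ into the target-side element and phrase the same step as an explicit contradiction. The "delicate point" you flag—that the endpoint elements play no role in the reducibility condition—is exactly the observation the paper relies on as well.
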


\begin{proof}
Suppose that $w = (a_n, y_n, \ldots, y_1, a_0)$ for $n > 0$ and that
$|w| = |(a)|$ for some $a$ in some $G_x$.
Let $w' = (a_n, y_n, \ldots, y_1, a_0 \, a^{-1})$.
Then $|w'|$ is an identity morphism in $\Pi_1 \Gamma$, so by
Proposition~\ref{thm:higgins}, $w'$ is reducible.
Since reduction occurs at interior points, $w$ must be reducible as well.
\end{proof}

\begin{cor}\label{cor:Xesembed}
Given a graph of groups $\Gamma$ and a vertex $x$, the vertex group
$G_x$ embeds in the automorphism group of $x$ in the fundamental groupoid 
$\Pi_1 \Gamma$. 	
\end{cor}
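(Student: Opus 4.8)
The plan is to construct the natural candidate homomorphism and reduce its injectivity to Higgins' result. Concretely, I would define the map
$\iota : G_x \to \Pi_1\Gamma(x,x)$ by $\iota(a) = |(a)|$, the realization of the zero-length word $(a)$, which is a morphism $x \to x$. Since $\Pi_1\Gamma$ is a groupoid, every such endomorphism is an automorphism, so the target is genuinely the automorphism group $\mathrm{Aut}_{\Pi_1\Gamma}(x)$. That $\iota$ is a group homomorphism is immediate from the fact that $|\cdot| : \K \to \Pi_1\Gamma$ is a functor together with the observation that composing the two zero-length words $(a)$ and $(b)$ in $\K$ yields the zero-length word $(ab)$, using the relations holding in $G_x$; hence $|(a)| \circ |(b)| = |(ab)|$.

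Next I would prove injectivity by showing that $\iota$ has trivial kernel. Suppose $a \in G_x$ with $|(a)|$ equal to the identity automorphism of $x$. The word $w = (a)$ has length $\ell(w) = 0$, and by the definition of reducibility, which requires $n > 1$, every zero-length word is automatically irreducible. Thus Proposition~\ref{thm:higgins} applies directly to $w$ and forces $w = (e)$, where $e$ is an identity element of some vertex group. Matching domains and codomains (and using that the $G_x$ are assumed disjoint), that identity element must be the identity of $G_x$, so the equality of words $(a) = (e)$ gives $a = e$. Therefore $\iota$ is injective, and $G_x$ embeds in $\mathrm{Aut}_{\Pi_1\Gamma}(x)$ as claimed.

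I do not anticipate a genuine obstacle: the corollary is essentially the specialization of Higgins' normal-form corollary (Proposition~\ref{thm:higgins}) to words of length zero, for which irreducibility holds trivially and no reduction step is needed. The only point that warrants a moment's care is precisely this recognition that a one-letter word is irreducible, so that Higgins' conclusion can be invoked without appeal to the reduction move, and the verification that $\iota$ respects multiplication with whatever composition convention is in force (the homomorphism-versus-anti-homomorphism choice is immaterial for injectivity).
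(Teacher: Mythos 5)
Your proposal is correct and follows essentially the same route as the paper: both define the map $a \mapsto |(a)|$, note that the length-zero word $(a)$ is irreducible, and invoke Proposition~\ref{thm:higgins} to conclude injectivity. Your additional verification that the map is a homomorphism is a harmless elaboration the paper leaves implicit.
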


Because of this, we regard elements of the vertex groups as elements of
the fundamental groupoid without explicitly naming the inclusion map.

\begin{proof}
The map sends $a \in G_x$ to the realization of the word $(a)$.
Since the word $(a)$ is irreducible, if the realization is an identity in $\Pi_1\Gamma$,
Proposition~\ref{thm:higgins} tells us that $a$ is the identity element of $G_x$.
Therefore, this map is injective.
\end{proof}

We next record some facts about free groups.

\begin{samepage}

\begin{lemma}\label{lem:onFreeGroups}
Let $A\subset B$ be nonabelian free groups, with $A$ free on generators
$\{a_i\}$ and $B$ free on $\{a_i\}\cup \{b_j\}$.
\begin{enumerate}
\item If $b\in B$ and for all $a\in A$ we have $b a b^{-1}=a$, then $b$ is the identity.
\item If $b\in B$ satisfies $b a b^{-1}\in A$ for some $a\in A$, then either $a$ is the
identity or $b\in A$. 
\end{enumerate}
\end{lemma}

\end{samepage}

\begin{proof}
Fix $b \in B$. For part (1), if we take $a = a_i$ then the assumption that $b a_i b^{-1} = a_i$
shows that an irreducible word for $b$ must have last letter $a_i$ or $a_i^{-1}$ for every $i$,
which is absurd since there are at least two $i$'s.

For part (2), we assume $a$ is nontrivial and $b \notin A$. Factor $b$ as
$b' b''$, where $b'' \in A$ while $b'$ is represented by an irreducible word with rightmost letter some $b_j$.
Then $b a b^{-1} = b' a' b'^{-1}$, where $a' := b'' a \, b''^{-1}$ is a
non-trivial element of $A$.  The conclusion
now follows from the observation that no reductions are possible in the concatenation
of the irreducible words for $b'$, $a'$ and $b'^{-1}$, since concatenating those words
gives no letter adjacent to its inverse.
\end{proof}

\subsection{A counterexample in the homotopy category of groupoids}\label{sse:groupoids}

We now apply the generalities above to the problem of weak colimits in
$\HoGpd$.

We fix for the rest of the paper an ordinal $\k$ of uncountable cofinality,
and introduce the main characters in our counterexample. Note that Theorem~\ref{thm:HotLacksColims}
will follow if we replace $\k=[0,\k)$ by the interval $[2,\k)$, since the two categories
are isomorphic.
We use the latter because it allows us to use simple indexing while ensuring
that all of the vertex groups below are non-abelian.

\begin{definition}\label{DandZ}
Define a graph of groups $\Gamma$ with object set $[2,\k)$, vertex group
 $G_\a$ free on $\a$ generators, edge 
set \mbox{$\{y_\a^\b:\b\to \a \mid\break \a\neq \b\in [2, \k) \}$}, and involution $y_\a^\b\mapsto y_\b^\a$. The edge group
$G_{y^\b_\a}$ is just $G_{\min(\b,\a)}$. The edge morphism
$\mu_{y^\b_\a}:G_{\min(\b,\a)}\to G_\b$ is the
natural inclusion. Let $Z=\Pi_1\Gamma$.

Next, define a diagram $D:[2,\k)\to \HoGpd$ by
letting $D(\a)=G_\alpha$,
with action on morphisms the natural inclusions, denoted by 
$D^\b_\a : D(\b) \to D(\a)$.
We have a cocone $A:D\to Z$ with $A_\a:D(\a)\to Z$ the natural 
inclusion of the vertex group. To see that these maps do constitute a cocone, we note
that $y_\a^\b$ is the unique component of a natural isomorphism 
$A_\b\iso A_\a\circ D^\b_\a$.
\end{definition}

We do not need this fact, but it may provide motivation to the reader to
know that $Z$ is the ``standard'' weak colimit of the diagram $D$, defined
as the homotopy coequalizer of the natural diagram
\[
  \coprod_{\b < \a} D(\b) \rightrightarrows \coprod_{\b} D(\b) .
\]
Critically,
we do \emph{not} have the relations $y^\b_\a y^\g_\b=y^\g_\a$ in $Z$ which would
allow us to lift $A$ into a cocone in the 2-category of groupoids. We now intend to show
that $D$ admits no privileged weak colimit by, roughly, showing that this failure is 
unavoidable: no choice of isomorphisms $A_\b\iso A_\a\circ D^\b_\a$ can give $A$
such a lift. 

Write $\gY$ for the subgroupoid of $Z$ generated by the edges of the graph.
Any morphism of $\gY$ can be uniquely written as a reduced word in the generators $y^\b_\a$.
We say that such a morphism \emph{passes through} a vertex $\a$ if this unique word
involves a generator with source or target $\a$.
The identity $\id_{\a}$ is said to \emph{pass through} $\a$ and no other vertex.

\begin{lemma}\label{lem:tIsHorizontal}
Let $u: \b \to \a$ in $Z$ and let $2 \leq \g \leq \min(\a, \b)$.
Then $u$ is in $\gY$ and does not pass through any vertex less than $\g$
if and only if
$u$ is the unique component of a natural isomorphism $A_\b \circ D^\g_\b
\iso A_\a \circ D^\g_\a$ between functors $D(\gamma)\to Z$. 
Explicitly, for all $a \in D(\gamma)$, we must have
$D^\g_\a(a) = u D^\g_\b(a) u^{-1}$ in $Z$.
\end{lemma}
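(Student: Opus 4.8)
Since the statement is an equivalence, I would prove the two implications separately and dispose of the easy direction first. Assume $u \in \gY$ passes through no vertex below $\g$, and write $u$ as its reduced edge word $e_n \cdots e_1$, with $e_i$ an edge from $\d_{i-1}$ to $\d_i$, where $\d_0 = \b$, $\d_n = \a$; the hypothesis says every $\d_i \geq \g$. For each $i$ the edge group of $e_i$ is $G_{\min(\d_{i-1},\d_i)}$, which contains $G_\g$ because $\g \leq \min(\d_{i-1},\d_i)$, so specializing the defining relation $\mu_{\bar e_i}(a) = e_i\,\mu_{e_i}(a)\,\bar e_i$ to $a \in G_\g$ gives $D^\g_{\d_i}(a) = e_i\,D^\g_{\d_{i-1}}(a)\,e_i^{-1}$. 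Telescoping these identities over $i = 1,\dots,n$ yields $D^\g_\a(a) = u\,D^\g_\b(a)\,u^{-1}$, which is exactly the asserted naturality equation.

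For the converse I would induct on the length $m$ of an irreducible word $w = (a_m, y_m, \ldots, y_1, a_0)$ representing $u$, with vertices $x_i = t(y_i)$, $x_0 = \b$, $x_m = \a$. When $m = 0$, the identity $D^\g_\b(a) = a_0\,D^\g_\b(a)\,a_0^{-1}$ says that $a_0$ centralizes $D^\g_\b(G_\g)$, so $a_0$ is trivial by Corollary~\ref{cor:onFreeGroups}(1) and $u = \id_\b \in \gY$. For $m \geq 1$, set $c(a) = a_0\,D^\g_\b(a)\,a_0^{-1} \in G_\b$ and consider the length-$2m$ word
\[ W(a) = (a_m, y_m, \ldots, y_1,\, c(a),\, \bar y_1, a_1^{-1}, \ldots, \bar y_m, a_m^{-1}), \]
whose realization is $u\,D^\g_\b(a)\,u^{-1} = D^\g_\a(a)$, a word of length $0$. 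By Corollary~\ref{cor:monotoneReduction}, $W(a)$ is reducible; since $w$ and $w^{-1}$ are irreducible, every edge-adjacency except the central pair $y_1, \bar y_1$ is internal to $w$ or $w^{-1}$ and admits no reduction, so the reduction must occur centrally. This forces $c(a) \in \mu_{y_1}(G_{y_1})$, i.e.\ $a_0\,D^\g_\b(a)\,a_0^{-1} \in G_{\min(\b,x_1)}$ for every $a \in G_\g$.

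The heart of the argument is to turn this containment into a shorter instance of the \emph{same} conjugation problem, and this is where Corollary~\ref{cor:onFreeGroups} does the real work. First I would rule out $x_1 < \g$: then $G_{\min(\b,x_1)} = G_{x_1} \subsetneq G_\g$, and applying Corollary~\ref{cor:onFreeGroups}(2) to a generator of $G_\g$ forces $a_0 \in G_\g$, whence $a_0\,D^\g_\b(G_\g)\,a_0^{-1} = D^\g_\b(G_\g) \not\subseteq G_{x_1}$, a contradiction; hence $x_1 \geq \g$. When moreover $x_1 < \b$, a second application of Corollary~\ref{cor:onFreeGroups}(2) with $A = G_{x_1}$ gives $a_0 \in G_{x_1}$. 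In every case, transporting $c(a)$ across $y_1$ via the edge relation produces an element $q_1 \in G_{x_1}$ satisfying $y_1\,a_0 = q_1\,y_1$ and sending the central letter to $q_1\,D^\g_{x_1}(a)\,q_1^{-1}$. Thus, writing $u' = (a_m, y_m, \ldots, y_2, a_1)$, the morphism $v := u' q_1 : x_1 \to \a$ has an irreducible representative of length $m-1$ and satisfies $v\,D^\g_{x_1}(a)\,v^{-1} = D^\g_\a(a)$. The induction hypothesis gives $v \in \gY$ passing through vertices $\geq \g$, and then $u = u'\,y_1\,a_0 = u'\,q_1\,y_1 = v\,y_1$ exhibits $u$ as an edge word through vertices $\geq \g$, completing the induction.

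The main obstacle, and the reason the bookkeeping of the third paragraph is unavoidable, is that irreducible words are \emph{not} unique: an element of $\gY$ may be represented by an irreducible word with nontrivial vertex letters---for instance, when $\b < \a$ the single edge $y^\b_\a$ is also realized by $(D^\b_\a(a_0^{-1}), y^\b_\a, a_0)$ for any $a_0 \in G_\b$. Consequently one cannot simply argue that the $a_i$ vanish; the content is to prove membership in $\gY$ \emph{as an element of} $Z$, and this is precisely what the identity $y_1\,a_0 = q_1\,y_1$ delivers once Corollary~\ref{cor:onFreeGroups} has pinned down $a_0$. I expect the delicate points to be verifying that the only reduction site of $W(a)$ is central and correctly tracking the transport constant $q_1$ through the two cases $x_1 \lessgtr \b$; the telescoping direction and the base case are routine.
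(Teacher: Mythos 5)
Your proposal is correct and follows essentially the same route as the paper's proof: the forward direction telescopes the edge relations along the reduced word, and the converse inducts on the length of an irreducible representative, using Corollary~\ref{cor:monotoneReduction} to force a central reduction of the conjugated word and Corollary~\ref{cor:onFreeGroups} to pin down $a_0$ before transporting it across $y_1$ and shortening. The only difference is cosmetic: the paper handles your three cases uniformly by setting $\ep=\min(\b,\d)$ and testing a nontrivial element of $G_{\min(\g,\ep)}$, whereas you split into $x_1<\g$, $\g\le x_1<\b$, and $x_1\ge\b$; your transport constant $q_1$ is the paper's $D^\ep_\d(\hat a_0)$.
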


\begin{proof}
Suppose that $u$ is in $\gY$ and does not pass through any vertex less than $\g$.
It suffices to show that $y^\b_\a$ conjugates $D^\g_\b$ into $D^\g_\a$
when $\g \leq \b \leq \a$.
In this case, $\mu_{y^\b_\a}$ is an identity map, and so the claim
follows from the defining relations of $Z$:
\[
    y^\b_\a \, D^\g_\b(a)  \, \bar{y}^\b_\a
  = y^\b_\a \, \mu_{y^\b_\a}(D^\g_\b(a)) \, \bar{y}^\b_\a
  =            \mu_{\bar{y}^\b_\a}(D^\g_\b(a))
  =            D^\b_\a(D^\g_\b(a))
  =            D^\g_\a(a) .
\]

For the converse,
let $u$ be the realization of an irreducible word $w=(a_n,y_n,\ldots,y_1,a_0)$. We
proceed by induction on $n$. If $n=0$, then $\a=\b$ and $u=|(a_0)|\in G_\b$. The 
assumption
that $D_\b^\g(a)=u D^\g_\b(a)u^{-1}$ shows that $u$ centralizes a nonabelian
subgroup of a free group. By Lemma~\ref{lem:onFreeGroups}~(1), we see that $u$ is trivial as desired.
And clearly $u$ does not pass through a vertex less than $\g$; indeed, it passes through only
$\beta$, and $\beta \geq \gamma$.

For the inductive step, assume $n > 0$. Then $s(y_1)=\b$ and $t(y_n)=\a$.
Let $t(y_1)=\d$, and note that $\d \neq \b$.
In terms of $w$, the assumption on $u$ is that the word
\[w'=(a_n,y_n,\ldots,y_1,a_0D^\g_\b(a)a_0^{-1},y_1^{-1},a_1^{-1},\ldots,y_n^{-1},a_n^{-1})\] 
has realization $D^\g_\a(a)$ for every $a\in G_\g$.
Thus, by Corollary~\ref{cor:monotoneReduction}, $w'$ is reducible.
Since by assumption $w$ is irreducible,
any reduction must occur at the central entry.
So, letting $\ep := \min(\b,\d)$, we must have $a_0 D^\g_\b(a) a_0^{-1} \in \mu_{y_1}(G_\ep) = D^\ep_\b(G_\ep)$.
In particular, $a_0 D^\ep_\b(\hat a) a_0^{-1} \in D^\ep_\b(G_\ep)$ for some
non-identity element $\hat a$ in $G_{\min(\g,\ep)}$.
So by Lemma~\ref{lem:onFreeGroups}~(2), we see that
$a_0\in D^\ep_\b(G_\ep)\subset G_\b $, that is, $a_0=D^\ep_\b(\hat{a}_0)$ for some $\hat{a}_0 \in G_{\ep}$.
It then follows that $D^\g_\b(a)$ is in the image of $D^\ep_\b$
for every $a \in G_\g$, which means that $\g \leq \ep$,
since the inclusions of vertex groups are strict.

The reduction of $w$ at its central entry is 
\[
  (a_n, y_n, \ldots, y_2,
  a_1 \, D^\ep_\d(\hat{a}_0) \, D^\g_\d(a) \, D^\ep_\d(a_0)^{-1} \, a_1^{-1},
  y_2^{-1}, a_2^{-1}, \ldots, a_n^{-1}).
\] 
Thus, if we define $u': \d \to \a$ to be $|w''|$,
where $w'' = (a_n, y_n, \ldots, y_2, a_1 D^\ep_\d(\hat{a}_0))$,
then $\ell(w'') < n$ and $u'$ conjugates $D^\g_\d$ to $D^\g_\a$.
By induction, $u' \in \gY$. Since
\[
  u' y_1 = a_n y_n \cdots y_2 a_1 D^\ep_\d(\hat{a}_0) y_1
         = a_n y_n \cdots y_2 a_1 y_1 D^\ep_\b(\hat{a}_0)
         = u,
\]
$u$ is in $\gY$ as well.
Finally, recall that we observed that $\g \leq \ep = \min(\b,\d)$.
By induction, $u'$ does not pass through any vertex less than $\g$.
So the same is true of $u = u' y_1$.
\end{proof} 

Let $\gX$ denote the subgroupoid of $Z$ containing those morphisms
in the image of $G_x$ for some $x$. By Corollary \ref{cor:Xesembed}, 
$\gX$ is isomorphic to the disjoint union of the groups $G_x$.

\begin{lemma}\label{lem:NoysinX}
Consider a morphism $z:\a\to\a$ in $Z$. If there are morphisms 
$u:\a\to \b$ and $v:\a \to\g$ in $Z$ such that $u z u^{-1}$ is in $\gX$ and $vzv^{-1}$
is in $\gY$, then $z=\id_\a$. 
\end{lemma}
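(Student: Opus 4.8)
The plan is to reduce the statement to a conjugacy fact in $Z$ and then attack it with the word calculus of Section~\ref{sse:graphs-of-groups}. First I would unwind the hypotheses. Since $uzu^{-1}\colon\b\to\b$ lies in $\gX$, and $\gX$ is the disjoint union of the vertex groups by Corollary~\ref{cor:Xesembed}, the element $a:=uzu^{-1}$ lies in $G_\b$; likewise set $b:=vzv^{-1}\in\gY$. Because $z=u^{-1}au$ with $u$ invertible, we have $z=\id_\a$ if and only if $a=e$. Moreover $a$ and $b$ are both conjugate to $z$, hence to one another: with $w:=uv^{-1}\colon\g\to\b$ we get $a=wbw^{-1}$. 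So the lemma reduces to the claim that a vertex-group element conjugate in $Z$ to an element of $\gY$ is trivial.

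To establish this I would first replace $b$ by a cyclically reduced representative of its conjugacy class. Writing $b=(t_n,\ldots,t_1)$ as a reduced word in the edge generators, if its outermost letters are mutually inverse ($t_1=\bar t_n$) I conjugate by an edge to cancel that pair, shortening $b$ by two while staying inside $\gY$; iterating, either $b$ becomes an identity—whence $a=wbw^{-1}=\id_\b$ and we are done—or $b$ is cyclically reduced of length $n\ge 1$ (and, adjusting $w$, we still have $a=wbw^{-1}$). Now I would analyze the word $w^{-1}(a)w$, which realizes $b$. Writing $w=(c_m,z_m,\ldots,z_1,c_0)$ as an irreducible word, this palindromic word is
\[
  (c_0^{-1},\bar z_1,\ldots,\bar z_m,\;c_m^{-1}ac_m,\;z_m,\ldots,z_1,c_0),
\]
and since $w$ and $w^{-1}$ are individually irreducible, the only possible reduction site is the central pair $\bar z_m,\,z_m$. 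Thus reduction proceeds strictly inward: at each stage the current central vertex element either fails to lie in the relevant edge group, so the word is irreducible and the process halts, or lies in it, so the pair cancels, the half-length drops by one, and the palindromic shape is preserved. If the process collapses all the way to a length-zero word, then $b$ lies in a vertex group; but $b\in\gY$ has length $n\ge 1$, so Corollary~\ref{cor:monotoneReduction} forces that length-$n$ word to be reducible, a contradiction. Hence the process must halt at a positive even length $2m'\ge 2$.

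The remaining case—halting at positive length—is the one I expect to be hardest. Here $b$ is realized by an irreducible palindromic word of length $2m'$ whose central vertex element is nontrivial (it fails to lie in an edge group, hence is not the identity), while its outermost edges are $\bar z_1$ and $z_1$, which are mutually inverse. On the other hand $b$ is also a cyclically reduced edge-word, whose outermost letters are \emph{not} mutually inverse. The crux is therefore to show that the outermost edges of a reduced expression are determined by the morphism it represents: the requisite rigidity of reduced words. I would extract exactly this from Proposition~\ref{thm:higgins}: given two irreducible words with the same realization, the product of one with the inverse of the other is an identity, so by Proposition~\ref{thm:higgins} it is either the empty word or reducible; peeling off the cancelling outermost pair and inducting shows that the two words share their edge sequence, and in particular their outermost edges. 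Comparing the palindromic word with the cyclically reduced $b$ then yields the contradiction, so that $n=0$ after all, forcing $a=e$ and hence $z=\id_\a$. The careful bookkeeping of this peeling argument—matching edges and tracking how the intervening vertex elements are forced into edge groups—is where the real work lies, the earlier steps being essentially formal.
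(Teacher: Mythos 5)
Your argument is correct, but it takes a genuinely different and much longer route than the paper's. The paper's proof is three lines: it observes that the inclusion $\gY \to Z$ admits a retraction $r\colon Z \to \gY$, defined by sending the generators of every vertex group to identities (the defining relations $\mu_{\bar y}(a) = y\mu_y(a)\bar y$ collapse to $e = y\bar y$, so $r$ is well defined). Applying $r$ to the hypothesis that $uv^{-1}\,y\,vu^{-1} \in \gX$, where $y = vzv^{-1} \in \gY$, kills the $\gX$ side while fixing $y$, so $r(uv^{-1})\,y\,r(uv^{-1})^{-1}$ is an identity in the free groupoid $\gY$; hence $y$ is conjugate to an identity and is therefore an identity, and so is $z$. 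Your proof instead works entirely inside the word calculus: you reduce to showing that a vertex-group element conjugate to an element of $\gY$ forces that element to be trivial, cyclically reduce $b$, and then run the inward reduction of the palindromic word $w^{-1}aw$ against Proposition~\ref{thm:higgins} and Corollary~\ref{cor:monotoneReduction}. The one ingredient you need that the paper never states---that two irreducible words with the same realization have the same edge sequence---does follow from Proposition~\ref{thm:higgins} by exactly the peeling argument you sketch (the concatenation of one irreducible word with the inverse of the other can only reduce at the junction, so cancellation forces the edge letters to match pairwise and the lengths to agree), so your plan goes through; note also that the irreducibility of $w^{-1}$ given that of $w$, which you use tacitly, holds because the reduction condition is invariant under inversion. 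What the retraction buys is complete avoidance of this rigidity lemma and of all case analysis; what your approach buys is independence from any global structural observation about $Z$, at the cost of substantially more bookkeeping. If you write it up, isolate the edge-sequence rigidity statement as its own lemma, since it is the only place where real care is needed.
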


\begin{proof}
Let $y=vzv^{-1}$. Note that the inclusion $\gY \to Z$ has a retraction
$r : Z \to \gY$ defined by sending the generators of each vertex group to identity
elements.
Since $uv^{-1} y vu^{-1}$ is in $\gX$, we have that $r(uv^{-1} y vu^{-1}) = r(uv^{-1})\, y \, r(uv^{-1})^{-1}$ is an identity,
and so $y$ is an identity. Since $y=vzv^{-1}$ is an identity, we have that $z$ is 
an identity as well.
\end{proof}

The following is the key technical result.

\begin{lemma}\label{IncoherenceOfZ}
Suppose given a family $u^\b_\a:\b\to\a$ of morphisms of $\gY$ for all $\b < \a \in [2, \k)$
such that $u^\g_\a = u^\b_\a u^\g_\b$ for all triples $\g < \b < \a$.
Then there exists a pair $\b < \a$ such that 
$u^{\b}_{\a}$ passes through some $\g$ with $\g<\b$. 
\end{lemma}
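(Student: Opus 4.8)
The plan is to argue by contradiction from the uncountable cofinality of $\k$: assume that for \emph{every} pair $\b < \a$ the morphism $u^\b_\a$ passes through no vertex less than $\b$, and produce a contradiction. The first move is to use the cocycle hypothesis to collapse the two‑parameter family to a one‑parameter family based at the least object $2$. Setting $v_\a := u^2_\a \in \gY$ (and $v_2 := \id_2$), the relation $u^2_\a = u^\b_\a u^2_\b$ gives $u^\b_\a = v_\a v_\b^{-1}$ for all $2 \le \b < \a$. Since $\gY$ is free on the generators $y^\b_\a$, each $v_\a$ is a unique reduced word, i.e.\ a reduced edge‑path from $2$ to $\a$, which (as there is exactly one geometric edge between any two vertices) is determined by its vertex sequence $2 = c^\a_0, c^\a_1, \ldots, c^\a_{\ell(v_\a)} = \a$. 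Notably, this argument will use only the free structure of $\gY$ and cofinality, not the vertex groups.

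The geometric heart of the proof is a common‑prefix estimate: for $2 \le \b < \a$, if $P$ is the maximal common prefix of the reduced paths $v_\a$ and $v_\b$, ending at a vertex $c$, then $c \ge \b$. To see this I would cancel $P$: the product $u^\b_\a = v_\a v_\b^{-1}$ reduces to the concatenation of the two divergent suffixes, a reduced path running from $\b$ through $c$ to $\a$, and by maximality of $P$ the two suffixes leave $c$ along different edges, so no further cancellation occurs at $c$. Thus $c$ is genuinely passed through, and since by assumption $u^\b_\a$ does not pass below $\b$, we conclude $c \ge \b$. (The degenerate cases in which one suffix is trivial give $c \in \{\b,\a\}$, which also satisfies $c \ge \b$.)

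Next I would invoke uncountable cofinality to pin down the lengths. Because each $v_\a$ is a finite word, if every fiber of $\a \mapsto \ell(v_\a)$ were bounded then $[2,\k)$ would be a countable union of bounded sets and hence bounded; so some value $\ell^*$ is attained on a cofinal set $C \subseteq [2,\k)$. I would then prove by induction on $k$ that the first $k$ coordinates eventually stabilize: there are fixed vertices $b_0 = 2, b_1, \ldots, b_k$ and a cofinal $C_k \subseteq C$ with $c^\a_i = b_i$ for all $\a \in C_k$ and $i \le k$. For the step, pass to the cofinal tail $C_{k+1} = \{\a \in C_k : \a > \max(b_0,\ldots,b_k)\}$; for $\b < \a$ in $C_{k+1}$ the common‑prefix estimate gives an endpoint $c \ge \b > b_i = c^\b_i$ for every $i \le k$, which forces the common prefix to have length at least $k+1$, whence $c^\a_{k+1} = c^\b_{k+1}$, a value independent of the chosen pair.

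Running the induction up to $k = \ell^*-1$ and passing to the final cofinal tail $C^*$ then yields the contradiction: for any $\b < \a$ in $C^*$ the common‑prefix estimate now forces agreement through coordinate $\ell^*$, i.e.\ $v_\a = v_\b$ and hence $\a = \b$, which is absurd on a cofinal set. The main obstacle, and the crux of the argument, is precisely this coordinate‑stabilization induction: the vertices $b_0,\ldots,b_k$ must be produced as \emph{genuine constants} before coordinate $k+1$ is treated, since it is only because a cofinally large $\a$ eventually exceeds these fixed values that the common‑prefix estimate can push the forced agreement one coordinate deeper. The remaining technical points are handling the degenerate cases of the common‑prefix estimate and checking that the finitely many passages to cofinal tails preserve cofinality.
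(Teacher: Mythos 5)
Your proof is correct, but it takes a genuinely different route from the paper's. You first normalize the two-parameter family to the one-parameter family $v_\a = u^2_\a$, so that $u^\b_\a = v_\a v_\b^{-1}$ and the hypothesis becomes the statement that the reduced paths $v_\a$ and $v_\b$ may diverge only at a vertex $c \ge \b$ (your common-prefix estimate, which is argued correctly, degenerate cases included). Uncountable cofinality then enters through the pigeonhole on word lengths --- a countable union of bounded subsets of $\k$ is bounded --- after which the coordinate-stabilization induction on a cofinal set of constant length $\ell^*$ forces $v_\a = v_\b$ for distinct $\a, \b$, a contradiction. The paper instead works directly with the two-parameter family: it builds an $\omega$-chain $\d_0 < \d_1 < \cdots$ in which each $\d_n$ dominates every vertex that $u^{\d_{n-2}}_{\d_{n-1}}$ passes through, extracts from each $u^{\d_{n-1}}_{\d_n}$ a letter $y_n = y^\b_\a$ with $\b < \d_n \le \a$, checks that the $y_n$ are distinct and cannot cancel in the telescoping products, and concludes that the reduced word for $u^{\d_0}_{\d_\omega}$ (with $\d_\omega$ above all $\d_n$) would have to contain infinitely many distinct letters. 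Both arguments use only the freeness of $\gY$ and the uncountable cofinality of $\k$; the paper's is shorter and more local, needing the cocycle condition only along a single chain and isolating one uncancellable letter per step, while yours is more global and geometric, exhibiting $\{v_\a\}$ as a bounded-length family of branches in a tree that the hypothesis forces to coincide cofinally. A side benefit of your version is that it makes visible exactly where countable cofinality would break the lemma, namely at the length pigeonhole (for $\k = \omega$, take $v_\a$ to be the path $2 \to 3 \to \cdots \to \a$).
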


\begin{proof}
  Assume that this is not the case.
 Let $\d_0=2$ and $\d_1=3$.
 Inductively, for each $n\in\omega$ let $\d_n$ be an ordinal exceeding every vertex that $u^{\d_{n-2}}_{\d_{n-1}}$ passes through.
 This is possible because $\k$ is a limit ordinal. 
 
 For each $n$, $u^{\d_{n-1}}_{\d_n}$ can be written uniquely as a reduced word in the free groupoid $\gY$.
 Let $y_n$ be a letter in this word which is of the form $y^\b_\a$ with
 $\b < \d_n \leq \a$.  Such a letter must exist since $u^{\d_{n-1}}_{\d_n}$
 starts at a vertex less than $\d_n$ and ends at $\d_n$.
 Note that $y_n$ cannot occur in the reduced form of
 any $u^{\d_{k-1}}_{\d_{k}}$ with $k \neq n$.
 For $k < n$, this holds by definition of $\d_n$,
 and for $k > n$, this holds by our assumption that each
 $u^{\b}_{\a}$ only passes through $\g$ with $\g \geq \b$.
 In particular, the $y_n$'s are distinct.

 Using that $\k$ has uncountable cofinality, choose $\d_{\omega} < \k$
 to be an ordinal exceeding every $\d_n$.
 Consider the decompositions
\[
  u^{\d_0}_{\d_\omega} = u^{\d_1}_{\d_\omega} u^{\d_0}_{\d_1}
  = u^{\d_2}_{\d_\omega} u^{\d_1}_{\d_2} u^{\d_0}_{\d_1}
  = u^{\d_3}_{\d_\omega} u^{\d_2}_{\d_3} u^{\d_1}_{\d_2} u^{\d_0}_{\d_1} = \cdots
\] 
 In the expression $u^{\d_1}_{\d_\omega} u^{\d_0}_{\d_1}$, a $y_1$ occurs in
 the reduced form of the right-hand factor, and does not occur in the
 left-hand factor, so the reduced form of $u^{\d_0}_{\d_\omega}$ must
 contain a $y_1$.
 Similarly, the second decomposition involves a $y_2$, which can't be
 cancelled from either side, so the reduced form of $u^{\d_0}_{\d_\omega}$ must
 contain a $y_2$.
 Continuing, we see that the reduced form of $u^{\d_0}_{\d_\omega}$ must
 contain countably many distinct letters, a contradiction.
\end{proof}

Recall that $\k$ is an arbitrary ordinal of uncountable cofinality. 

\begin{prop}\label{HogLacksColims}
There exists a diagram $C:[2,\k)\to \HoGpd$ valued in the homotopy category of groupoids 
such that for any weak colimit with cocone $F:C\to W$, there exists an automorphism in $W$
which is not conjugate to any morphism in the image of any leg \mbox{$F_\a: C(\a)\to W$} of $F$. 
\end{prop}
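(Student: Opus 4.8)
The plan is to take $C = D$, the diagram of Definition~\ref{DandZ}, and to extract the desired pathological automorphism from the failure of the cocone $A : D \to Z$ to be coherent. So suppose $F : D \to W$ is any weak colimit. Since $A$ is a cocone on $D$, it factors through the weak colimit, yielding a morphism $h : W \to Z$ in $\HoGpd$ with $h \circ F_\a = A_\a$ for every $\a$. Because each $D(\a)$ is a one-object groupoid, the cocone identities $F_\a \circ D^\b_\a = F_\b$ are witnessed, after choosing representative functors, by single morphisms $t^\b_\a : F_\b(*) \to F_\a(*)$ in $W$ satisfying $F_\a(D^\b_\a(a)) = t^\b_\a \, F_\b(a) \, (t^\b_\a)^{-1}$ for all $a \in G_\b$.

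First I would push these data into $Z$. Transporting along the natural isomorphisms that witness $h \circ F_\a = A_\a$ and writing $u^\b_\a : \b \to \a$ for the morphism of $Z$ corresponding to $h(t^\b_\a)$, the displayed relation becomes $D^\b_\a(a) = u^\b_\a \, a \, (u^\b_\a)^{-1}$ in $Z$ for all $a \in G_\b$. This is precisely the hypothesis of Lemma~\ref{lem:tIsHorizontal} in the case $\g = \b$, so each $u^\b_\a$ lies in $\gY$ and passes through no vertex below $\b$. Now Lemma~\ref{IncoherenceOfZ} forbids a whole family of such morphisms from satisfying the cocycle identity $u^\g_\a = u^\b_\a u^\g_\b$: were that identity to hold for all triples $\g < \b < \a$, the lemma would produce a pair with $u^\b_\a$ passing through a vertex below $\b$, a contradiction. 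Hence the cocycle identity must fail for some triple $\g < \b < \a$.

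Given such a failing triple, I would set
\[
  z := (t^\g_\a)^{-1} \, t^\b_\a \, t^\g_\b \in \mathrm{Aut}_W(F_\g(*)),
\]
the automorphism measuring the incoherence. Its image $h(z)$ corresponds to the nonidentity element $(u^\g_\a)^{-1} u^\b_\a u^\g_\b$ of $\gY$. I claim $z$ is not conjugate in $W$ to any morphism in the image of a leg $F_\d$. Indeed, if $z = q \, F_\d(a) \, q^{-1}$ for some $a \in G_\d$ and some $q$ in $W$, then applying $h$ would exhibit $h(z)$ as conjugate in $Z$ to $A_\d(a) \in \gX$; since $h(z)$ is also conjugate to an element of $\gY$, Lemma~\ref{lem:NoysinX} would force $h(z) = \id$, contradicting the failure of the cocycle identity. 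Thus $z$ is the required automorphism.

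The main obstacle is the careful bookkeeping in the first two paragraphs: everything in $\HoGpd$ is defined only up to natural isomorphism, so the component morphisms $t^\b_\a$ and the identifications witnessing $h \circ F_\a = A_\a$ must be chosen and tracked so that the conjugation relations transport faithfully into $Z$, where Lemmas~\ref{lem:tIsHorizontal} and~\ref{lem:NoysinX} can be applied. The conceptual heart, by contrast, is the recognition that the incoherence automorphism $z$ is exactly the obstruction sought: Lemma~\ref{IncoherenceOfZ} guarantees its nontriviality for some triple, while Lemma~\ref{lem:NoysinX}, separating $\gX$ from $\gY$, certifies that it cannot be conjugated into any vertex group, i.e.\ into the image of any leg.
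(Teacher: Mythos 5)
Your proposal is correct and follows essentially the same route as the paper: the same diagram $D$, the same transport of the coherence data $t^\b_\a$ into $Z$ via the factorization through the weak colimit, and the same three lemmas (\ref{lem:tIsHorizontal}, \ref{IncoherenceOfZ}, \ref{lem:NoysinX}) used for the same purposes, with your $z$ being exactly the paper's incoherence automorphism $w_{\a\b\g}$. The only difference is presentational---you unwind the paper's proof by contradiction into a direct argument that first locates a failing triple and then exhibits the obstruction explicitly---which changes nothing of substance.
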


\begin{proof}
We claim that the diagram $D$ (see Definition \ref{DandZ}) is an example of such a $C$.

Towards a contradiction, suppose $F:D\to W$ is a weakly colimiting cocone such that every 
automorphism in $W$ is conjugate to one in the image of some component of $F$.
Write $F_\a$ for functors representing the maps $D(\a) \to W$.
Since $F$ is a cocone in $\HoGpd$, for each $\b < \a \in [2, \k)$ we may 
choose a natural isomorphism
\[ h^\b_\a: F_\b \iso F_\a\circ D^\b_\a \]
between functors $D(\b) \to W$ in $\Gpd$. Denote by $\hat{h}^\b_\a$ the unique
component of $h^\b_\a$. As usual we shall denote $(h^\b_a)^{-1}$ by $h^\a_\b$,
and similarly for $\hat{h}$, as well as $u$ below.

Recall the natural cocone $A:D\to Z$ from Definition \ref{DandZ} 
and suppose given a representative $f : W \to Z$ of a 
factorization of the cocone $A$ through $F$. For each $\a$, pick a natural isomorphism $k_\a: A_\a \iso f \circ F_\a$ with unique component $\hat{k}_\a$.
For $\b < \a$, let $u^\b_\a = \hat{k}_\a^{-1} f(\hat{h}^\b_\a) \hat{k}_\b$,
the unique component of the natural transformation $A_\b\to A_\a \circ D^\b_\a$ defined by $(k_\a^{-1} * D^\b_\a) \circ (f*h^\b_\a) \circ k_\b$,
where $*$ denotes whiskering.\footnote{For instance, $f*h_\a^\b:f\circ F_\b \iso f\circ F_\a\circ D_\a^\b$ has unique component $f(\hat h^\b_\a)$.} 
By Lemma \ref{lem:tIsHorizontal}, we see that each 
 $u^\b_\a\in \gY$, so the same holds for the morphism $u_{\a\b\g}:\g\to \g$ defined as 
 $u^\a_\g u^\b_\a u^\g_\b$ for $\g < \b < \a$. Furthermore, the same lemma guarantees that 
 no $u^\b_\a$ passes through a vertex less than $\min(\b,\a)$. 

 For each $\g<\b<\a$, denote by $w_{\a\b\g} \in W$ the unique component of the 
 composite natural transformation 
\[h^\a_\g \circ (h^\b_\a * D^\g_\b) \circ h^\g_\b:F_\g\to F_\g.\]
We have 
$w_{\a\b\g} = \hat{h}^\a_\g \hat{h}^\b_\a \hat{h}^\g_\b$, so 
\[ \hat{k}_\g^{-1} f(w_{\a\b\g}) \hat{k}_\g = \hat{k}_\g^{-1} f(\hat{h}^\a_\g) \hat{k}_\a \hat{k}_\a^{-1} f(\hat{h}^\b_\a) \hat{k}_{\b} \hat{k}_\b^{-1} f(\hat{h}^\g_\b) \hat{k}_\g = u_{\a\b\g} . \]
In particular, $u_{\a\b\g}$ is conjugate to $f(w_{\a\b\g})$. 

On the other hand, by assumption on $F$, $w_{\a\b\g}$ is conjugate to a morphism in 
the image of some $F_\th:D(\th)\to W$, say to $F_\th(w'_{\a\b\g})$.
Composing with $f$, we see that $u_{\a\b\g}$ is conjugate to $f(F_\th(w'_{\a\b\g}))$.
Finally, using $\hat k_\g$, we see $u_{\a\b\g}$ is conjugate to $A_\th(w'_{\a\b\g})$, 
in particular, to an element of $Z_X$. Since we saw above that $u_{\a\b\g}$ is in $Z_Y$,
Lemma \ref{lem:NoysinX} shows that $u_{\a\b\g}=\id_\g$.

Finally, Lemma~\ref{IncoherenceOfZ} implies that at least one $u^\b_\a$ passes
through a vertex less than $\b$, contradicting what we saw above.
\end{proof}

\begin{proof}[Proof of Theorem \ref{thm:HotLacksColims}]
By Proposition~\ref{HogLacksColims} and Remark~\ref{rem:s1vsautos}, the
diagram $D$ admits no weak colimit privileged with respect to the set $\GG'=\{\BZ\}$.
Thus by Lemma~\ref{ReductionToHog}, $B \circ D$ admits no weak colimit in $\Hot$
which is privileged with respect to the set of spheres.
\end{proof}

\section{The spheres reflect equivalences in the 2-category of spaces}

We saw in Theorem~\ref{thm:noGenerator} that in the homotopy category of spaces
there is no set of objects that jointly reflects isomorphisms.
In this section, we show that in the homotopy \emph{2-category} of spaces,
the spheres do jointly reflect equivalences.
We first define the terms we are using.

\begin{definition}\label{def:TwoHot}
By $\twocat{Hot}$, we mean the 2-category
whose objects are spaces of the homotopy type of a CW-complex
and whose hom-categories are the fundamental
groupoids of mapping spaces, that is $\twocat{Hot}(X,Y)=\Pi_1(Y^X)$. 
\end{definition}


\begin{definition}\label{2-strong generator}
A set $\mc G$ of objects in a 2-category $\mc K$ 	
\define{jointly reflects equivalences} if, whenever $f:X\to Y$ 
is a morphism in $\mc K$ such that, for every $S\in \mc G$, the induced
functor $\mc K(S,f):\mc K(S,X)\to \mc K(S,Y)$ is an equivalence of categories,
then $f$ itself must be an equivalence in $\mc K$. 
\end{definition}

We shall show in Theorem \ref{thm:SpheresGenerateTwoHot} that the 2-category
$\twocat{Hot}$ admits a set $\mc G$ of objects that jointly reflects
equivalences, namely $\mc G=\{S^n \mid n \geq 0 \}$. 
Note that a map $f$ in $\twocat{Hot}$ is an equivalence if and only if it
is a homotopy equivalence. This theorem is a corollary of Theorem 1 in 
\cite{MMS}, which shows that for a map $f:X\to Y$ of (arcwise connected) spaces
which is surjective on all fundamental groups, bijectivity of $f$ on 
higher homotopy groups is equivalent to that on free homotopy classes of
maps from spheres.

With this, we are prepared to show that the spheres satisfy the analogue
of Whitehead's theorem for $\twocat{Hot}$.

\begin{thm}\label{thm:SpheresGenerateTwoHot}
The set $\mc G=\{S^n\}$ of spheres jointly reflects equivalences in the
$2$-category $\twocat{Hot}$ of spaces.
\end{thm}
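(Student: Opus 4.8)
The plan is to suppose $f \colon X \to Y$ is a map such that $\twocat{Hot}(S^n, f) \colon \Pi_1(X^{S^n}) \to \Pi_1(Y^{S^n})$ is an equivalence of categories for every $n \geq 0$, and to deduce that $f$ is a homotopy equivalence by verifying the hypotheses of the ordinary Whitehead theorem, namely that $f$ induces a bijection on $\pi_0$ and isomorphisms on all homotopy groups at every basepoint. The central tool is Lemma \ref{lem:pinMapsFromSpheres}, which computes $\pi_k(X^{S^n}, x) \cong \pi_k(X,x) \ltimes \pi_{n+k}(X,x)$; the strategy is to extract the various homotopy groups of $X$ and $Y$ from these mapping-space groups and to show $f_*$ is an isomorphism on each.

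First I would handle $\pi_0$. An equivalence of categories is in particular a bijection on isomorphism classes of objects, and the isomorphism classes of objects of $\Pi_1(X^{S^n})$ are exactly the free homotopy classes of maps $S^n \to X$, i.e.\ $\Hot(S^n, X)$. For $n = 0$ this is just $\pi_0(X)$ (as a set of two-point configurations, or more simply using $S^0$), so the $n=0$ case already gives that $f$ is a bijection on path components; from here I may fix a component and a basepoint $x \in X$ and work componentwise, comparing $x$ with $f(x)$. Next I would extract the homotopy groups. Fixing the constant map $x$ as an object of $\Pi_1(X^{S^n})$, its automorphism group in $\Pi_1(X^{S^n})$ is $\pi_1(X^{S^n}, x)$, which by Lemma \ref{lem:pinMapsFromSpheres} (with $k=1$) is $\pi_1(X,x) \ltimes \pi_{n+1}(X,x)$. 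Since an equivalence of groupoids induces isomorphisms on automorphism groups of objects, and since $\twocat{Hot}(S^n, f)$ sends $x$ to $f(x)$, I get that $f$ induces an isomorphism $\pi_1(X,x) \ltimes \pi_{n+1}(X,x) \cong \pi_1(Y,f(x)) \ltimes \pi_{n+1}(Y,f(x))$ for every $n \geq 0$.

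The main obstacle is to disentangle the semidirect product so as to conclude that $f_*$ is an isomorphism on each factor separately, rather than merely on the product group. The clean way to do this is to observe that the semidirect-product decomposition is natural: the splitting comes from the constant-map section $X \to X^{S^n}$ and the normal subgroup $\pi_{n+1}(X,x)$ is the image of $\pi_1(\Omega^n(X,x))$ under the fibre inclusion, both of which are induced by maps of spaces and hence are respected by $f$. Concretely, evaluation $e \colon X^{S^n} \to X$ and its section are natural in $X$, so $f$ carries the short exact sequence $1 \to \pi_{n+1}(X,x) \to \pi_1(X^{S^n},x) \to \pi_1(X,x) \to 1$ to the corresponding one for $Y$ compatibly with the splittings; the five lemma (or just direct diagram-chasing on a split sequence) then forces $f_*$ to be an isomorphism on the quotient $\pi_1$ and on the kernel $\pi_{n+1}$ independently. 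The $\pi_1$ statement is the $n=0$ instance; letting $n$ range over all nonnegative integers yields that $f_*$ is an isomorphism on $\pi_{m}(X,x)$ for every $m \geq 1$. Having verified bijectivity on $\pi_0$ and isomorphisms on $\pi_m$ for all $m \geq 1$ at every basepoint, I would invoke Whitehead's theorem (valid since all spaces here have CW homotopy type) to conclude that $f$ is a homotopy equivalence, which is exactly an equivalence in $\twocat{Hot}$.
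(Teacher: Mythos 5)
Your proposal is correct and follows essentially the same route as the paper: extract $\pi_0$ (and $\pi_1$) from the $S^0$ case, identify the automorphism group of the constant map in $\twocat{Hot}(S^n,X)$ as $\pi_1(X^{S^n},x)$ via Lemma~\ref{lem:pinMapsFromSpheres}, use naturality of the evaluation fibration and its constant-map section to split off $\pi_{n+1}$, and conclude by the classical Whitehead theorem. The only immaterial difference is bookkeeping: the paper first gets the isomorphism on the quotient $\pi_1$ from the $S^0$ case and then deduces the kernel isomorphism from the map of short exact sequences, whereas you use compatibility with the splittings to obtain the isomorphisms on kernel and quotient simultaneously.
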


\begin{proof}
Let $f:X\to Y$ be such that $\twocat{Hot}(S^n,f):\twocat{Hot}(S^n,X)\to \twocat{Hot}(S^n,Y)$ is an equivalence of groupoids, for every $n$. 
Consider an inclusion of $*$ into $S^0\cong *\sqcup *$.
Since this has a retraction, the functor $\twocat{Hot}(*,X)\to \twocat{Hot}(*,Y)$
is a retract of the equivalence $\twocat{Hot}(S^0,X)\to \twocat{Hot}(S^0,Y)$
and is therefore also an equivalence.
That is, $f$ induces an equivalence $\Pi_1(X)\to \Pi_1(Y)$ of fundamental groupoids.
Thus $f$ induces an isomorphism on $\pi_0$ and on every $\pi_1$.  

Therefore, we can apply Theorem 1 of \cite{MMS}, so that $f$ will 
be a homotopy equivalence as soon as it induces a bijection on free
homotopy classes of maps from $S^n$. Now, the set of free homotopy classes
of maps $S^n\to X$ is simply the set of connected components in the groupoid
$\twocat{Hot}(S^n,X)$. Since $f$ induces an equivalence $\twocat{Hot}(S^n,X)\to\twocat{Hot}(S^n,Y)$, \emph{a fortiori} it induces an isomorphism on 
connected components, and the theorem is proven.
\end{proof}

\bibliography{hotHasNoGenerator}{}
\bibliographystyle{plain}

\end{document}